\newtheorem{theorem}{Theorem}
\newtheorem{proposition}[theorem]{Proposition}
\newtheorem{lemma}[theorem]{Lemma}
\newtheorem{corollary}[theorem]{Corollary}
\theoremstyle{definition}
\newtheorem{remark}[theorem]{Remark}
\newcommand{\calT}{{\cal T}}
\newcommand{\interior}[1]{{\rm Int}(#1)}
\newcommand{\ptwoirred}{$\mathbb{P}^2$-irreducible}
\newcommand{\matRP}{\mathbb{RP}}
\newcommand{\Tone}{\mbox{${\rm T}_1$}}
\newcommand{\Ttwo}{\mbox{${\rm T}_2$}}
\newcommand{\Ti}{\mbox{${\rm T}_*$}}
\begin{document}

\title{Moves for standard skeleta\\ of 3-manifolds with marked boundary}

\author{Gennaro Amendola}

\maketitle

\begin{abstract}
A 3-manifold with marked boundary is a pair $(M,X)$, where $M$ is a compact 3-manifold whose (possibly empty) boundary is made up of tori and Klein bottles, and $X$ is a trivalent graph that is a spine of $\partial M$.
A standard skeleton of a 3-manifold with marked boundary $(M,X)$ is a standard sub-polyhedron $P$ of $M$ such that $P \cap \partial M$ coincides with $X$ and with $\partial P$, and such that $P \cup \partial M$ is a spine of $M \setminus B$ (where $B$ is a ball).

In this paper, we will prove that the classical set of moves for standard spines of 3-manifolds ({\em i.e.}~the MP-move and the V-move) does not suffice to relate to each other any two standard skeleta of a 3-manifold with marked boundary.
We will also describe a condition on the 3-manifold with marked boundary that tells whether the generalised set of moves, made up of the MP-move and the L-move, suffices to relate to each other any two standard skeleta of the 3-manifold with marked boundary.

For the 3-manifolds with marked boundary that do not fulfil this condition, we give three other moves: the CR-move, the \Tone-move and the \Ttwo-move.
The first one is local and, with the MP-move and the L-move, suffices to relate to each other any two standard skeleta of a 3-manifold with marked boundary fulfilling another condition.
For the universal case, we will prove that the non-local \Tone-move and \Ttwo-move, with the MP-move and the L-move, suffice to relate to each other any two standard skeleta of a generic 3-manifold with marked boundary.

As a corollary, we will get that disc-replacements suffice to relate to each other any two standard skeleta of a 3-manifold with marked boundary.
\end{abstract}

\begin{center}
{\small\noindent{\bf Keywords}:\\
3-manifold, marked boundary, skeleton, calculus.}
\end{center}

\begin{center}
{\small\noindent{\bf MSC (2010)}:
57M27 (primary), 57M20 (secondary).}
\end{center}

\section*{Introduction}

In~\cite{Matveev:compl_def} Matveev defined for any
compact 3-manifold $M$ a non-negative integer $c(M)$, which he called
the {\em complexity} of $M$.
The complexity function $c$ has the following remarkable properties:
it is additive under connected sum, it does not increase when cutting
along incompressible surfaces, and it is finite-to-one on the most
interesting classes of 3-manifolds.
Namely, among the compact 3-manifolds having complexity $c$ there is
only a finite number of closed \ptwoirred\ ones and a
finite number of finite-volume hyperbolic ones (with cusps and/or with
compact geodesic boundary).
The complexity of a closed \ptwoirred\ 3-manifold is then precisely the minimal number of
tetrahedra needed to triangulate it, except when its complexity is $0$, {\em
i.e.}~when it is $S^3$, $\matRP^3$ or $L_{3,1}$.

The problem of computing (or at least estimating) the complexity of $M$ naturally arose.
For the closed case, Martelli and Petronio developed a theory of decomposition of closed \ptwoirred\ 3-manifolds~\cite{Martelli-Petronio:decomposition}.
The decomposition is made along tori and Klein bottles (as in the JSJ
decomposition) in such a way that the complexity of the original
manifold is the sum of the (suitably defined) complexities of the
building blocks (called bricks).
The bricks carry an extra structure given by a finite set of trivalent
graphs, each contained in a boundary component so that the complement is a disc.
These graphs are fundamental because they affect both the definition
of the complexity of bricks and the reassembling of bricks.

This theory seems to be very useful for the computation/estimation process.
Martelli and Petronio~\cite{Martelli-Petronio:complexity_9} used the orientable version of it to list closed
irreducible orientable 3-manifolds of complexity up to 9, and then Martelli~\cite{Martelli:complexity_10} used it to get the list up to complexity 10.
Moreover, in~\cite{Martelli-Petronio:estimation_complexity}, Martelli and Petronio gave estimations on the complexity of closed 3-manifolds.
There are many reasons making this theory feasible, at least in
the orientable case up to complexity 10: for instance, there are very
few bricks with respect to closed manifolds, they must satisfy
many topological restrictions (so the search of bricks is easier than
that of closed manifolds), they can be assembled (to produce closed
manifolds) in a finite (small) number of ways, and the decomposition
into bricks seems to be a refinement of the JSJ decomposition (so it is easy to
give a ``name'' to the manifolds obtained by assembling bricks).

The main objects of the decomposition theory are the 3-manifolds with marked boundary.
A 3-manifold with marked boundary is a pair $(M,X)$, where $M$ is a compact 3-manifold whose (possibly empty) boundary is made up of tori and Klein bottles, and $X$ is a trivalent graph that is a spine of $\partial M$.
The main tools used in the decomposition to work on 3-manifolds with marked boundary are the standard skeleta.
A standard skeleton of a 3-manifold with marked boundary $(M,X)$ is a standard sub-polyhedron $P$ of $M$ such that $P \cap \partial M$ coincides with $X$ and with $\partial P$, and such that $P \cup \partial M$ is a spine of $M \setminus B$ (where $B$ is a ball).
We remark that standard skeleta are viewed up to isotopy.

These objects may seem less natural than those used by Turaev and Viro in~\cite{Turaev-Viro}, {\em i.e.}~the standard spines $P$ of $M$ such that $P \cap \partial M$ coincides with $X$ and with $\partial P$.
However, standard skeleta are very useful when one wants to glue two 3-manifolds with marked boundary along the boundary by identifying the trivalent graphs, because after the gluing the skeleta unite and form a standard skeleton of the manifold with marked boundary obtained.
On the contrary, the spines used by Turaev and Viro would unite and form a spine of the manifold minus a ball.

In this paper, we will deal with the problem of deciding how different standard skeleta of a 3-manifold with marked boundary are related to each other.
We will prove that the classical set of moves for standard spines of 3-manifolds ({\em i.e.}~the MP-move and the V-move) does not suffice to relate to each other any two standard skeleta of a 3-manifold with marked boundary.
The reason is that these moves are very ``local'', {\em i.e.}~the portion of the skeleton involved in the move is contained in a ``small'' ball.
A first solution to this problem is to use the L-move, a generalisation of the V-move.
However, this suffices only for a particular class of 3-manifolds with marked boundary.
We will define an object for each standard skeleton of a 3-manifold with marked boundary that is invariant under MP-moves and L-moves.
This object is the main ingredient for giving a condition on the 3-manifold with marked boundary that tells whether the MP-move and the L-move suffice to relate to each other any two standard skeleta of the 3-manifold with marked boundary.

If $M$ has $n$ boundary components, an {\em octopus} $o$ in $M$ is the image (viewed up to isotopy) of an embedding of the cone on $n$ points such that the preimage of each boundary component of $M$ is exactly one endpoint of the cone.
Note that any two omotopy equivalent octopuses can be obtained from each other by means of changes of crossings.
Instead, a modification of the homotopy type of an octopus needs a more drastic move.

If $P$ is a standard skeleton of $(M,X)$, we consider the ideal triangulation dual to $P \cup \partial M$; the edges dual to the regions in the boundary components of $M$ form an octopus defined unambiguously from $P$.
The main point is that the octopus of $P$ is invariant under MP-moves and L-moves.

We will prove that if in a 3-manifold with marked boundary there is only one octopus, the MP-move and the L-move suffice to relate to each other any two standard skeleta of the 3-manifold with marked boundary.
For the 3-manifolds with marked boundary that do not fulfil this condition, we give three other moves that can change the octopus: the CR-move, the \Tone-move and the \Ttwo-move.
They are particular types of the so-called disc replacement moves, which have been defined by Matveev (see~\cite{Matveev:new:book}).
The first one is ``local'' and allows us to change a crossing of the octopus.
We will prove that if any two octopuses of a 3-manifold with marked boundary can be obtained from each other by changes of crossing, then any two standard skeleta of the manifold can be related to each other by MP-moves, L-moves and CR-moves.
For the universal case, we will use the non-local \Ti-moves, which allow us to change the octopuses arbitrarily, and we will prove that the MP-move, the L-move and the \Ti-moves suffice to relate to each other any two standard skeleta of a generic 3-manifold with marked boundary.

As a corollary, we will get that disc replacement moves (indeed, only some of them that we will call disc-replacements) suffice to relate to each other any two standard skeleta of a 3-manifold with marked boundary.

\section{Preliminaries}

Throughout this paper, $M$ will be a fixed compact connected 3-manifold with (possibly empty) boundary made up of tori and Klein bottles.
We will adopt an ``embedded viewpoint'', {\em i.e.}~$M$ is supposed to be fixed and every object in $M$ is viewed up to isotopy in $M$.
Using the {\em Hauptvermutung}, we will freely intermingle the differentiable,
piecewise linear and topological viewpoints.

\subsection{Manifolds with marked boundary}

\paragraph{Spines of surfaces}
If $C$ is a connected surface, we call a {\em spine} of $C$ a trivalent graph $X$ contained in $C$ such that $C \setminus X$ is an open disc.
If $C$ has $n$ connected components, a spine of $C$ is a collection of $n$ spines, one for each component of $C$.
Spines of surfaces are viewed up to isotopy.
With an easy Euler-characteristic argument, it can be proved that if $C$ is a torus $T$ or a Klein bottle $K$, a spine of $C$ must be a connected trivalent graph with two vertices.
Note that there are precisely two such graphs: in Fig.~\ref{2vert_spines_surfaces:fig} we have shown the two graphs, say $\theta$ and $\sigma$, and their embeddings in $C$.
\begin{figure}
  \centerline{\includegraphics{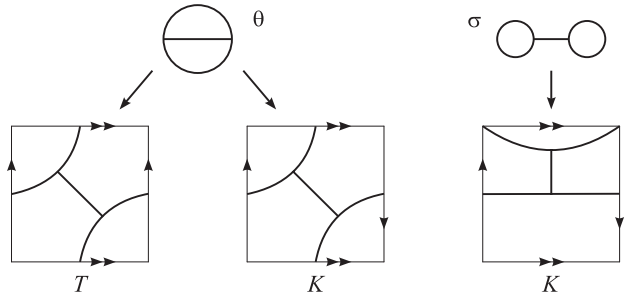}}
  \caption{The spines of the torus $T$ and the Klein bottle $K$.}
  \label{2vert_spines_surfaces:fig}
\end{figure}
Note that $\theta$ is a spine of both the torus and the Klein bottle, while $\sigma$ is a spine of the Klein bottle only.
Note also that the image of the embedding of $\theta$ in the torus is not unique (also up to isotopy), while the images of the embeddings of both $\theta$ and $\sigma$ in the Klein bottle are unique (up to isotopy)~\cite{Martelli-Petronio:decomposition}.

\paragraph{Manifolds with marked boundary}
A pair $(M,X)$ is said to be a {\em manifold with marked boundary} if $X$ is a spine of $\partial M$.
Hence, we have $\partial M = \sqcup_{i=1}^n C_i$, where each $C_i$ is a torus or a Klein bottle, and $X = \{X_1,\ldots,X_n\}$, where $X_i \subset C_i$ so that $C_i \setminus X_i$ is a disc.
As said above, $M$ is considered fixed, while $X$ is viewed up to isotopy.

\subsection{Spines, skeleta and ideal triangulations}

\paragraph{Standard polyhedra}
A {\em quasi-standard} polyhedron $P$ is a finite, connected and purely $2$-dimensional polyhedron in which each point has a neighbourhood
of one of the types shown in Fig.~\ref{quasi_standard:fig}.
\begin{figure}
  \centerline{\includegraphics{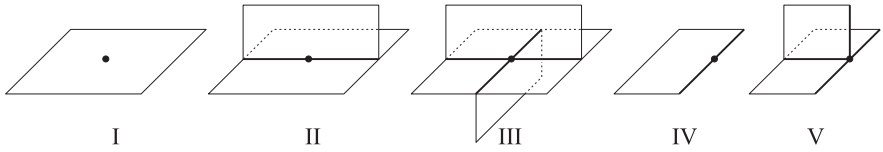}}
  \caption{The five typical neighbourhoods in a quasi-standard polyhedron.}
  \label{quasi_standard:fig}
\end{figure}
The {\em boundary} $\partial P$ of $P$ is the trivalent graph made up of the points of type IV and V.
The set of points of type II and III (the {\em singular points}) is denoted by $S(P)$.
A quasi-standard polyhedron is called {\em standard} if it is cellularized by singularity and boundary.
In a standard polyhedron, the points of type III are called {\em vertices}, the connected components of the set of the points of type II are called
{\em edges}, and the connected components of the set of the points of type I (the {\em non-singular points}) are called {\em regions}.
In the figures, the singular set and the boundary of the polyhedron are drawn thick, and the vertices are marked by a thick dot.

\paragraph{Spines and skeleta}
A sub-polyhedron $P$ of a manifold $M$ with non-empty boundary is called a {\em spine} of $M$ if $M$ collapses to it.
If $M$ is closed, the boundary can be created by puncturing $M$ ({\em i.e.}~by considering $M$ minus a ball).
It is by now well-known, after the work of Casler~\cite{Casler}, that a standard spine without boundary determines unambiguously $M$ up to homeomorphism and that
every $M$ has standard spines.
In the literature it is a customary convention that the spine should embed in $\interior{M}$, but this is not essential, so we allow spines to
embed in the whole of $M$.

A {\em skeleton} $P$ of a manifold with marked boundary $(M,X)$ is a quasi-standard sub-polyhedron $P$ of $M$ such that $P \cap \partial M$
coincides with $X$ and with $\partial P$, and such that $P \cup \partial M$ is a spine of $M \setminus B$ (where $B$ is a ball).
Each skeleton of $(M,X)$ is always viewed up to isotopy.
Note that $P \cup \partial M$ has no boundary, and that if $M$ is closed ({\em i.e.}~$X = \emptyset$), a standard skeleton of $(M,\emptyset)$ is just a standard spine without boundary of
$M$.
We will prove below that every manifold with marked boundary has standard skeleta (Lemma~\ref{existence:lem}).

\begin{remark}
For the sake of clarity, we mention that our notion of skeleton is different from the Turaev-Viro one~\cite{Turaev-Viro}, which has been described in Introduction above.
Actually, our notion of skeleton is less general than the Martelli-Petronio one~\cite{Martelli-Petronio:decomposition} because they allow points to have as regular neighbourhood the cone on any compact subset of the circle with three radii; however, our notion of standard skeleton is equal to that of~\cite{Martelli-Petronio:decomposition}.
\end{remark}

\paragraph{Ideal triangulations and duality}
An {\em ideal triangulation} of a manifold $M$ with non-empty boundary is a partition $\calT$ of $\interior{M}$ into open cells of dimensions 1, 2 and 3, induced by a triangulation $\widehat{\calT}$ of the space $\widehat{M}$, where:

\begin{itemize}

\item $\widehat{M}$ is obtained from $M$ by collapsing to a point each component of $\partial M$;

\item $\widehat{\calT}$ is a triangulation only in a loose sense, namely self-adjacencies and multiple adjacencies of tetrahedra are
allowed;

\item the vertices of $\widehat{\calT}$ are precisely the points of $\widehat{M}$ corresponding to the components of $\partial M$, so $\widehat{M}$ minus those vertices can be identified with $\interior{M}$.

\end{itemize} 

It turns out~\cite{Matveev-Fomenko,Matveev:new:book} that there exists a natural bijection between standard spines
without boundary and ideal triangulations of a manifold.
Given an ideal triangulation $\calT$, the corresponding standard spine without boundary $P$ is just the 2-skeleton of the dual cellularization, as
illustrated in Fig.~\ref{duality:fig}.
\begin{figure}
  \centerline{\includegraphics{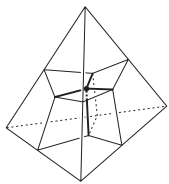}}
  \caption{Portion of spine dual to a tetrahedron of an ideal triangulation.}
  \label{duality:fig}
\end{figure}
The inverse of this correspondence is denoted by $P \mapsto \calT(P)$, and $\calT(P)$ is said to be the ideal triangulation {\em dual to
$P$}.

\subsection{First moves}\label{first_moves:subsec}

In this section, we will describe the moves giving the calculus for standard spines without boundary.
These moves will be fundamental for the generalisation of this calculus to standard skeleta.

\paragraph{MP-move}
Let us start from the move shown in Fig.~\ref{MP_V_moves:fig}-left, which is called an MP-{\em move}.
\begin{figure}
  \centerline{\includegraphics{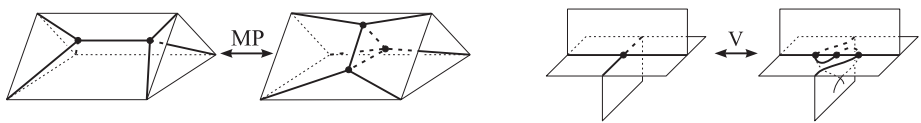}}
  \caption{The MP-move (on the left) and the V-move (on the right).}
  \label{MP_V_moves:fig}
\end{figure}
Such a move will be called {\em positive} if it increases (by one) the number of vertices, and {\em negative} otherwise.
Note that if we apply an MP-move to a standard skeleton of $(M,X)$, the result will be another standard skeleton of $(M,X)$.
It is already known (see Theorem~\ref{MP_calculus:teo} below) that any two standard spines without boundary of the same $M$ with at least two
vertices can be transformed into each other by MP-moves.

\paragraph{V-move}
If one of the two standard spines without boundary of $M$ (we want to transform into each other) has just one vertex, another move is required.
The move shown in Fig.~\ref{MP_V_moves:fig}-right is called a V-{\em move}.
Note that if we apply such a move to a standard skeleton of $(M,X)$, the result will be another standard skeleton of $(M,X)$.
As above, we have {\em positive} and {\em negative} V-moves.

If a positive V-move is applied to a standard spine without boundary with at least two vertices, the V-move is a composition of
MP-moves.
In Fig.~\ref{V_comp_MP:fig} we show the three positive and the negative MP-moves giving the V-move.
\begin{figure}
  \centerline{\includegraphics{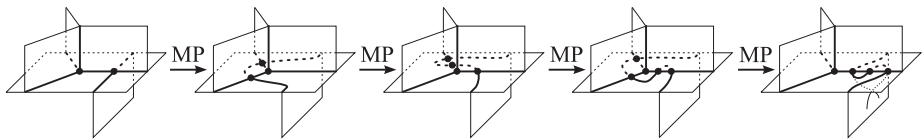}}
  \caption{If there is another vertex, each positive V-move is a composition of MP-moves.}
  \label{V_comp_MP:fig}
\end{figure}

On the contrary, if a V-move is applied to a standard skeleton (also with many vertices), the fact that the V-move is a composition of MP-moves
may not be true: in fact, it may occur that no edge starting from the vertex on which we apply the V-move ends in another vertex ({\em i.e.}~all the edges starting from the vertex end in the boundary or in the vertex itself).
For instance, consider the manifold with marked boundary $B_3$ of~\cite{Martelli-Petronio:complexity_9}, {\em i.e.}~$\left(T\times I, \left\{X_0,X_1\right\}\right)$ where $X_0$ and $X_1$ are related by a flip, and its standard skeleton with one vertex (see Fig.~\ref{manifold_one_vert:fig}).
\begin{figure}
  \centerline{\includegraphics{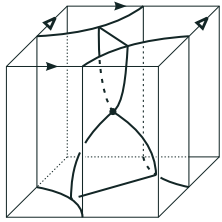}}
  \caption{The 3-manifold with marked boundary $B_3$ and its standard skeleton with one vertex (the lateral faces should be identified).}
  \label{manifold_one_vert:fig}
\end{figure}

\paragraph{The calculus for standard spines without boundary}
It is already known, after the work of Matveev~\cite{Matveev:calculus} and Piergallini~\cite{Piergallini}, that the moves described above give a
calculus for standard spines without boundary.
Namely, we have the following.

\begin{theorem}[Matveev-Piergallini]\label{MP_calculus:teo}
Any two standard spines without boundary of $M$ can be obtained from each other via a sequence of {\rm V}-~and {\rm MP}-moves.
If moreover the two spines have at least two vertices, then they can be obtained from each other via a sequence of {\rm MP}-moves
only.
\end{theorem}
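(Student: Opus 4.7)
The plan is to translate the statement through the duality $P \mapsto \calT(P)$ into an equivalent statement about ideal triangulations of $M$. Under this correspondence, the MP-move corresponds exactly to the $2$--$3$ bistellar (Pachner) move on two tetrahedra sharing a face, and the V-move corresponds to the $1$--$4$ move that subdivides a tetrahedron by introducing an interior vertex. Consequently the theorem is equivalent to the assertion that any two ideal triangulations of $M$ are related by a sequence of $2$--$3$, $3$--$2$, $1$--$4$ and $4$--$1$ moves, and that the last two can be dispensed with whenever both triangulations have at least two tetrahedra.

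For the first (general) assertion I would proceed as follows. Given ideal triangulations $\calT_1,\calT_2$ of $M$, view them as genuine triangulations of the quotient $\widehat{M}$ of $M$. Because $M$ is a PL manifold and the two triangulations are PL-compatible, Alexander's theorem provides a common stellar subdivision obtained by finitely many stellar/inverse-stellar moves supported in the interior (so that the ideal vertices are never disturbed). Each such stellar move on an interior tetrahedron, face or edge can be realised by a finite composition of $1$--$4$ and $2$--$3$ Pachner moves and their inverses; translating back through duality yields the required sequence of V- and MP-moves relating $P_1$ to $P_2$.

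For the sharper assertion, I would start from the sequence produced above and use the local identity displayed in Fig.~\ref{V_comp_MP:fig}: a positive V-move performed at a vertex $v$ that is connected along a singular edge to some other vertex equals the composition of three positive MP-moves and one negative MP-move. I would therefore try to rewrite the chain so that, at every step where a V-move is performed, the ambient standard spine has at least two vertices and moreover the vertex where the move is applied has a neighbouring vertex; then each V-move gets replaced by the four-MP-move string.

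The main obstacle is precisely the rewriting in the last step: the chain supplied by the first part may pass transiently through spines with a single vertex, or through configurations where the vertex on which we act has no neighbouring vertex along a singular edge. I would deal with this by a uniformising trick: perform a preliminary positive MP-move on each end spine to guarantee at least two vertices and a nontrivial singular-edge adjacency, and, along the chain, insert/cancel auxiliary MP-moves to ensure this property persists before every V-move is applied. Verifying that such a normalisation can always be carried out without new obstructions — essentially the combinatorial heart of the Matveev--Piergallini argument — is where the real work lies.
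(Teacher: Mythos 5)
The paper offers no proof of this statement: it is quoted as a known result of Matveev and Piergallini, with references, so there is no internal argument to compare yours against. Judged on its own merits, your proposal has genuine gaps beyond the one you flag yourself.

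First, the dictionary you set up is wrong on the V-move side. The V-move increases the number of vertices of the spine by two (the paper's own decomposition in Fig.~\ref{V_comp_MP:fig} is three positive MP-moves followed by one negative one, for a net gain of two vertices), so dually it replaces one tetrahedron by three; it is \emph{not} the $1$--$4$ Pachner move. Indeed a $1$--$4$ move introduces an interior vertex of the triangulation, which destroys the defining property of an ideal triangulation (all vertices must be the collapsed boundary components); dually, introducing an interior vertex corresponds to the bubble move, which turns a spine of $M$ minus a ball into a spine of $M$ minus two balls and hence leaves the class of standard spines of $M$ altogether. This error propagates into your main step: Alexander's common stellar subdivision necessarily creates new interior vertices, so the intermediate triangulations in your chain are not ideal and are not dual to standard spines of $M$. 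You cannot arrange the stellar moves to be ``supported in the interior so that the ideal vertices are never disturbed'' in any way that avoids this — the whole difficulty of deducing a calculus for ideal triangulations (equivalently, for standard spines) from Alexander/Pachner is precisely the elimination of these extra vertices, and that elimination is itself a theorem comparable in depth to the one you are trying to prove. The actual proofs of Matveev and Piergallini work directly with the polyhedra (via general position arguments for the collapses of $M$ onto its two spines), not through subdivision theory.

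Second, for the sharper assertion you correctly identify that a positive V-move can be traded for MP-moves only when the vertex acted on has a singular edge ending at \emph{another} vertex, but your ``uniformising trick'' is exactly the combinatorial heart of the Matveev--Piergallini argument, and you leave it unverified by your own admission. Note also that inserting auxiliary positive MP-moves does not obviously help: the obstruction is not merely the vertex count but the adjacency pattern at the specific vertex where the V-move is performed, and negative MP-moves occurring in the chain can destroy the property you are trying to maintain. As it stands the proposal is a plausible plan with an incorrect translation step and the essential difficulty deferred, not a proof.
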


\paragraph{L-move}
A generalisation of the V-move is the L-{\em move}, see Fig.~\ref{L_move:fig}.
\begin{figure}
  \centerline{\includegraphics{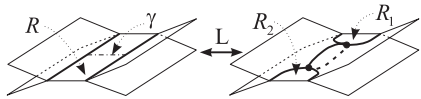}}
  \caption{The L-move.}
  \label{L_move:fig}
\end{figure}
As above, we have {\em positive} and {\em negative} L-moves.
As opposed to the V-move, this move is non-local, so it must be described with some care.
A positive L-move, which increases by two the number of vertices, is determined by an arc $\gamma$ disjoint from $\partial P$ and properly
embedded in a region $R$ of $P$.
The move acts on $P$ as in Fig.~\ref{L_move:fig}, but, in order to define its effect unambiguously, we must specify which pairs of regions, out of the
four regions incident to $R$ at the endpoints of $\gamma$, will become adjacent to each other after the move.
This is achieved by noting that we can choose a transverse orientation for the regular neighbourhood of $\gamma$ in $R$.
Using it, at each endpoint of $\gamma$ we can tell from each other the two regions incident to $R$ as being an upper and a lower one, and we
can stipulate that the two upper regions will become adjacent after the move (and similarly for the lower ones). 

For the negative case the situation is more complicated.
A negative L-move can lead to a non-standard polyhedron.
If $R_1$ and $R_2$ belong to the same region, after the negative L-move, the ``region'' $R$ would not be a disc.
To avoid this loss of standardness, we will call negative L-moves only those preserving standardness.
So a negative L-move can be applied only if the regions $R_1$ and $R_2$ are different.
With this convention, if we apply an L-move to a standard skeleton of $(M,X)$, the result will be another standard skeleton of
$(M,X)$.

\begin{remark}
A region $R$ of $P$ that is a rectangle incident to two edges of $\partial P$ cannot be modified via V-~and MP-moves, so we will need L-moves.
Consider, for instance, $T\times I$ (or $K\times I$) and its skeleton $X\times I$, where $X$ is a spine of $T$ (or $K$).
\end{remark}

\section{A partial calculus}\label{sec:partial_calculus}

Since a standard skeleton of a manifold with marked boundary $(M,\emptyset)$ is actually a standard spine without boundary of the manifold
$M$, a calculus for standard skeleta of $(M,\emptyset)$ is already known (see Theorem~\ref{MP_calculus:teo}).
For this reason, we will consider only manifolds with non-empty boundary, and, from now on, $(M,X)$ will be a manifold with marked boundary with $X = \{X_1,\ldots,X_n\} \neq \emptyset$.

\subsection{Octopus}

We have already noted that both L-~and MP-moves preserve the property of being a standard skeleton of $(M,X)$.
But there is also an invariant of standard skeleta of $(M,X)$ unchanged by L-~and MP-moves.
It is just this invariant the reason why L-~and MP-moves are not enough to obtain all the standard skeleta of $(M,X)$ from a fixed one.
In this section we will define this invariant.

\paragraph{Octopus}
An {\em octopus} $o$ in $M$ is the image of an embedding of the cone over $n$ points in $M\setminus\cup_{i=1}^{n}X_i$, such that the preimage of each boundary component of $M$ is exactly one endpoint of the cone.
More precisely, $o$ is the union $\cup_{i=1}^n \tau_i([0,1])$ of simple arcs $\tau_i : [0,1] \rightarrow M$ such that the $\tau_i$'s are disjoint
except for an endpoint that is in common ({\em i.e.}~$\tau_i([0,1]) \cap \tau_j([0,1]) = \{\tau_i(0)\}$, for $i \neq j$) and such that each $\tau_i$ has the other
endpoint on the component $C_i$ of $\partial M$ minus $X_i$ ({\em i.e.}~$\tau_i([0,1]) \cap \partial M = \{\tau_i(1)\} \subset C_i \setminus X_i$, for
$i=1,\ldots,n$).
Each $\tau_i$ will be called a {\em tentacle of $o$}, and the common endpoint $\tau_*(0)$ of the tentacles will be called the {\em head of $o$}.
As for skeleta, octopuses are viewed up to isotopy.

Now, we are able to define the invariant.
Let $P$ be a standard skeleton of $(M,X)$.
Recall that, by definition, $P \cup \partial M$ is a standard spine without boundary of $M \setminus B$, where $B$ is a ball.
Consider the ideal triangulation $\calT(P \cup \partial M)$ of $M \setminus B$.
The polyhedron $P \cup \partial M$ is obtained from $P$ by adding $n$ regions $C_i \setminus X_i$ (with $i=1,\ldots,n$).
Let us call $\alpha_i$ the edge of $\calT(P \cup \partial M)$ dual to the region $C_i \setminus X_i$.
The union of the arcs $\alpha_i$ is an octopus, defined unambiguously from $P$ (up to isotopy).
It will be called the {\em octopus of $P$} and will be denoted by $o(P)$.
Note that $P$ is a standard spine (with boundary) of $M \setminus N(o(P))$, where $N(o(P))$ is a regular neighbourhood of the octopus $o(P)$.

\begin{remark}\label{one_octopus:rem}
If $M$ has only one boundary component ({\em i.e.}~$n=1$), then $(M,X)$ has only one octopus.
\end{remark}

\begin{remark}\label{invariance_octopus:rem}
Let $P$ be a standard skeleton of $(M,X)$ with octopus $o(P)$.
We have already noted (see Section~\ref{first_moves:subsec}) that if we apply an L- or an MP-move to $P$, we obtain another standard
skeleton $P'$ of $(M,X)$.
Both $P$ and $P'$ are spines of $M \setminus N(o(P))$, so the octopuses $o(P)$ and $o(P')$ are equal.
Therefore, if $(M,X)$ has more than one octopus, the L- and the MP-move do not suffice to give a set of moves for standard skeleta of $(M,X)$.
\end{remark}

\paragraph{Existence of a standard skeleton for each octopus}
For each octopus $o$ in $(M,X)$ there exists a standard skeleton $P$ of $(M,X)$ such that $o=o(P)$.
This fact can be obviously deduced from the following lemma.
Note that the extra arc in the statement below is not necessary for proving the existence of standard skeleta, but we state the lemma in this form because it will be useful afterwards.
\begin{lemma}\label{existence:lem}
Let $o$ be an octopus in $(M,X)$, and let $\varphi : [0,1] \rightarrow \interior{M}$ be a simple loop starting from the head of $o$ such that $\varphi([0,1]) \cap o =
\{\varphi(0)=\varphi(1)\}$.
Then, there exists a standard skeleton $P$ of $(M,X)$ such that $o=o(P)$ and $\varphi([0,1])$ is an edge of the ideal triangulation $\calT(P \cup
\partial M)$ of $M$ minus a ball.
\end{lemma}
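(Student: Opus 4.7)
The plan is to construct $P$ as the dual spine of a carefully chosen ideal triangulation. Fix a small closed ball $B\subset\interior{M}$ containing the head $\tau_*(0)$ of $o$, small enough that $B\cap\tau_i([0,1])$ is an initial sub-arc of $\tau_i$ for each $i$ and $B\cap\varphi([0,1])$ is a small arc through $\varphi(0)=\varphi(1)$. In $\widehat{M\setminus B}$, let $v_0$ be the ideal vertex dual to $\partial B$ and let $v_i$ be the one dual to $C_i$; the trimmed tentacle of $\tau_i$ becomes an arc $\alpha_i$ from $v_i$ to $v_0$, and the trimmed $\varphi$ becomes a loop based at $v_0$.

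It suffices to produce an ideal triangulation $\widehat\calT$ of $\widehat{M\setminus B}$ in which each $\alpha_i$ and $\varphi$ is an edge and in which the link of each $v_i$ (for $i\geq 1$) is a two-triangle triangulation of $C_i$ whose dual $1$-skeleton is exactly $X_i$. Granted such a $\widehat\calT$, take $P$ to be the dual $2$-skeleton of $\widehat\calT$ pushed back into $M\setminus B$ and intersected with $\interior{M}$: by the duality between ideal triangulations and standard spines without boundary recalled in the Preliminaries, $P\cup\partial M$ is a standard spine without boundary of $M\setminus B$, so $P$ is a standard skeleton of $(M,X)$; the intersection $P\cap\partial M$ agrees with $X$ because each link's dual graph is $X_i$; since the link of $v_i$ has a unique vertex, the only edge of $\widehat\calT$ emanating from $v_i$ is $\alpha_i$, which is therefore the edge dual to $C_i\setminus X_i$, giving $o(P)=\bigcup_i\alpha_i=o$; and $\varphi$ is an edge of $\widehat\calT(P\cup\partial M)=\widehat\calT$ by construction.

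To build $\widehat\calT$ I would proceed bottom-up: triangulate each $C_i$ with two triangles around the vertex $\tau_i(1)$ so that the dual graph is $X_i$, which is possible because every isotopy class of $\theta$- or $\sigma$-spine of a torus or Klein bottle arises as the dual of a minimal triangulation (cf.\ Figure~\ref{2vert_spines_surfaces:fig}); triangulate $\partial B$ with a vertex set containing the $n+2$ points $(o\cup\varphi)\cap\partial B$; and then extend these boundary data to a triangulation of $M\setminus B$ whose $1$-skeleton realises the trimmed tentacles and the trimmed $\varphi$ as edges, after which collapsing the boundary components yields $\widehat\calT$. The main obstacle is this last extension step: one needs a triangulation of a compact $3$-manifold extending two prescribed boundary triangulations and containing a prescribed finite set of disjoint properly embedded arcs as edges. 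It can be handled by first taking any sufficiently fine triangulation extending the boundary data, and then applying local bistellar moves in a tubular neighbourhood of each prescribed arc to insert it into the $1$-skeleton without disturbing the boundary or the previously inserted arcs.
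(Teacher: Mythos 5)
Your overall strategy is genuinely different from the paper's: you try to build the skeleton as the dual of an ideal triangulation of $M$ minus a ball with prescribed edges (the trimmed tentacles and $\varphi$) and prescribed one-vertex links at the ideal vertices $v_i$, whereas the paper works directly on the spine side. The dual reformulation itself is essentially correct (the identification of $\alpha_i$ as the edge dual to $C_i\setminus X_i$, and of the one-vertex link condition with $P\cap\partial M=X_i$, both go through), but the entire difficulty of the lemma has been pushed into your ``extension step'', and the mechanism you propose for it does not work as stated. First, a ``sufficiently fine triangulation extending the boundary data'' is in tension with the boundary data themselves: the links at the $v_i$ must be two-triangle one-vertex triangulations, so the triangulation cannot be fine near $\partial M$, and in any case what you obtain is an honest triangulation with many interior vertices, not an ideal triangulation. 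Converting it into an ideal triangulation (removing interior vertices, forcing the vertex set to be exactly the collapsed boundary components) is a nontrivial procedure which in general destroys both the prescribed boundary links and any edges you have arranged to coincide with the arcs. Second, ``applying local bistellar moves in a tubular neighbourhood of each prescribed arc to insert it into the 1-skeleton'' is not a known elementary operation: subdivision makes an isotopic copy of the arc into a simplicial \emph{path}, not a single edge, and contracting that path to one edge is exactly the kind of step that can fail and that interferes with the other prescribed arcs and with the boundary. No argument is given for why these operations can be performed compatibly, and producing an ideal triangulation with prescribed edges and prescribed vertex links is essentially equivalent to the lemma you are trying to prove.

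For comparison, the paper avoids all of this by staying on the spine side. It takes regular neighbourhoods $N(o)$ and $N(\varphi)$ with $N(o)\cup N(\varphi)$ a regular neighbourhood of $o\cup\varphi([0,1])$, lets $Q$ be a standard spine without boundary of $M\setminus(N(o)\cup N(\varphi))$ (which exists by Casler's theorem), and then sets $P=Q\cup D\cup(\partial D\times[0,1))\cup\bigcup_i\bigl(X_i\times[0,1)\bigr)$, where $D$ is a meridian disc of $N(\varphi)$ and the half-open annuli are obtained by projecting $\partial D$ and the $X_i$ onto $Q$ along the retraction of $M\setminus(N(o)\cup N(\varphi))$ onto $Q$ (arranged to be transverse to $S(Q)$ and to each other). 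The complement of $P\cup\partial M$ is then the ball $N(o)\cup(N(\varphi)\setminus D)$, from which $o(P)=o$ and the duality of $\varphi$ with the region $D\cup(\partial D\times[0,1))$ are read off directly. If you want to keep your dual viewpoint, you would need to either import a genuine prescribed-edge ideal-triangulation result or reconstruct something equivalent to this spine-side argument; as written, the key step is a gap.
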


\begin{proof}
Let us consider regular neighbourhoods $N(o)$ of the octopus $o$ and $N(\varphi)$ of $\varphi([0,1])$, such that $N(o) \cup N(\varphi)$ is a
regular neighbourhood of $o \cup \varphi([0,1])$.
Let $Q$ be a standard spine without boundary of $M \setminus (N(o) \cup N(\varphi))$ contained in $\interior{M \setminus (N(o) \cup
N(\varphi))}$.
Note that we have a retraction $\pi$ of $M \setminus (N(o) \cup N(\varphi))$ onto $Q$.
Let $D$ be a disc properly embedded in $N(\varphi)$ intersecting $\varphi([0,1])$ transversely in one point.
Now, we can suppose that, by projecting $\partial D$ and $X$ to $Q$ along $\pi$, we obtain $\partial D \times [0,1)$ and $X \times
[0,1)$.
Up to isotopy, we can also suppose that both $\pi(\partial D)$ and all $\pi(X_i)$'s intersect $S(Q)$, and that $\pi(\partial D \cup
X)$ is transverse to $S(Q)$ and to itself.
Let us define $P$ as the union of $Q$, the disc $D$, the annulus $\partial D \times [0,1)$ and the $X_i \times [0,1)$'s.
The polyhedron $P$ is the skeleton we are looking for: in fact, $P$ is standard, $P \cap \partial M$ coincides with $X$ and with $\partial P$, $P \cup \partial M$ is a standard
spine of $M$ minus a ball, and $\varphi([0,1])$ coincides with the edge dual to the region $D \cup (\partial D \times [0,1))$ of
$P$.
\end{proof}

\subsection{Super-standard skeleta}

In this section we will describe a technical result that will be useful afterward.
A standard skeleton $P$ of $(M,X)$ will be called {\em super-standard} if $P = Q \cup (X \times [0,1))$, where $Q$ is a standard polyhedron
without boundary and $X \times [0,1)$ is made up of the regions of $P$ incident to $\partial P$.
For the sake of clarity, we note that our definition of super-standard skeleton is slightly different from the one of~\cite{Martelli-Petronio:decomposition}.

\begin{lemma}\label{super_standard:lem}
Each standard skeleton $P$ of $(M,X)$ can be transformed into a super-standard one via {\rm L}-~and {\rm MP}-moves.
\end{lemma}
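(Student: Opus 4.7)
The plan is to bring $P$ into super-standard form by applying L-moves (with MP-moves interspersed as needed) so that every region of $P$ incident to $\partial P$ meets $X = \partial P$ along exactly one edge of $X$. Once this is achieved, the boundary-incident regions assemble canonically into a product collar $X \times [0,1)$ of $X$ in $P$, and the union $Q$ of the remaining regions and singular edges inherits the structure of a standard polyhedron without boundary, yielding the decomposition $P = Q \cup (X \times [0,1))$.

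To drive the reduction, I would introduce the complexity
\[
d(P) \; = \; \sum_{R} \#\bigl(\text{edges of } X \text{ on } \partial R\bigr) \; - \; \#\{\text{edges of } X\},
\]
where the sum ranges over the regions $R$ of $P$ incident to $\partial P$ and edges are counted with multiplicity (to account for self-adjacencies). Then $d(P) \geq 0$, with equality precisely when $P$ is super-standard. If $d(P) > 0$, some region $R_0$ touches $X$ along two or more edges $e_1, \dots, e_k$. First I would choose a properly embedded arc $\gamma \subset R_0$, disjoint from $\partial P$, with endpoints on distinct arcs of $S(P) \cap \partial R_0$, arranged so that $\gamma$ separates the boundary-edge incidences of $R_0$ into two nonempty groups. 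A positive L-move along $\gamma$ then splits $R_0$ into two disks, each meeting $X$ in strictly fewer edges than $R_0$, so $d(P)$ strictly decreases. Iterating drives $d(P)$ to zero in finitely many steps.

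At termination, each edge $e$ of $X$ is adjacent to a unique region $R_e$ of $P$ whose boundary contains $e$ as its only arc on $\partial P$. At each vertex $v$ of $X$, the three regions corresponding to the three edges of $X$ at $v$ share an arc of $S(P)$ issuing from $v$, locally realizing a tripod-times-interval neighbourhood $Y \times [0,1)$; globally the union of these regions is then isomorphic to $X \times [0,1)$, and the remaining regions and singular edges assemble into a standard polyhedron $Q$ without boundary, producing the super-standard decomposition.

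The main obstacle is to guarantee that each intended L-move is applicable and preserves standardness: even for a positive L-move the splitting arc $\gamma$ must yield two genuinely new disk regions, which can fail when $R_0$ has a combinatorially too simple singular frontier. In such configurations I would first apply a positive MP-move on a singular edge incident to $R_0$ to enlarge the local structure and create the combinatorial room needed for a valid $\gamma$. A secondary concern is to verify that every resulting region of $Q$ remains an open disk; any non-disk face that emerges can be repaired by further MP-moves in the interior. With these technicalities handled, the monotone decrease of $d(P)$ yields the desired super-standard skeleton in a finite sequence of moves.
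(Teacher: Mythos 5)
Your first stage (splitting every boundary-incident region by L-moves until each such region meets $X$ along exactly one edge, with the count $d(P)$ as a termination measure) matches the opening step of the paper's proof. But the proposal then asserts that at this point the boundary-incident regions ``assemble canonically into a product collar $X\times[0,1)$,'' and this is where the argument has a genuine gap. A region $R_e$ meeting $X$ along the single edge $e$ is a disc whose boundary circle runs along $e$, along the two singular arcs issuing from the endpoints of $e$, and then along an arbitrary word in $S(P)$ deep inside $M$; nothing prevents two boundary-incident regions (or one with itself, or regions attached to different components $X_i$ and $X_l$) from being adjacent along an edge of $P$ having \emph{no} endpoint on $X$. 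Such an edge cannot lie in the singular set of $X\times[0,1)$ (whose singular arcs all have an endpoint on $X$), so the decomposition $P=Q\cup(X\times[0,1))$ fails whenever such an adjacency exists. The paper calls these \emph{bad adjacencies} and devotes the central stage of its proof to eliminating each one by a specific combination of a C-move (itself a composition of V- and MP-moves applied at a vertex), an MP-move and an L-move, checking that no new bad adjacencies are created. This step is not a technicality you can wave at with ``MP-moves to create combinatorial room''; it is the core content of the lemma, and your proposal contains no mechanism for it.

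There is also a weaker but real issue in your final step. Even after the collar is achieved, the complement $Q$ need only be a \emph{quasi-standard} polyhedron without boundary: it may be a surface (no singular points), may have no vertices, and may have non-disc $2$-dimensional components. You acknowledge the last problem but propose to fix it with interior MP-moves, which do not subdivide regions; the paper instead chooses a system of arcs cutting the non-disc components into discs and applies L-moves along them, and it uses positive C-moves to create singularities and vertices when $Q$ lacks them. You would need to add both the bad-adjacency elimination and this standardisation of $Q$ for the argument to close.
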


Before proving the lemma, we describe another move on standard skeleta useful in the proof.
We call a C{\em -move} the move shown in Fig.~\ref{C_move:fig}.
\begin{figure}
  \centerline{\includegraphics{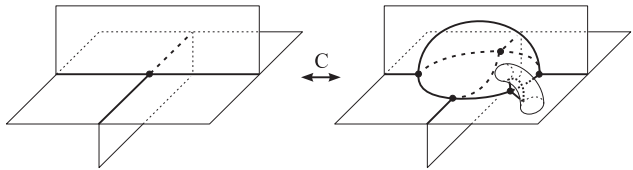}}
  \caption{The C-move.}
  \label{C_move:fig}
\end{figure}
As for the other moves, we have {\em positive} and {\em negative} C-moves.
Each positive C-move is a composition of V-~and MP-moves: the V-move and the (four) MP-moves are shown in Fig.~\ref{C_comp_V_MP:fig}.
\begin{figure}
  \centerline{\includegraphics{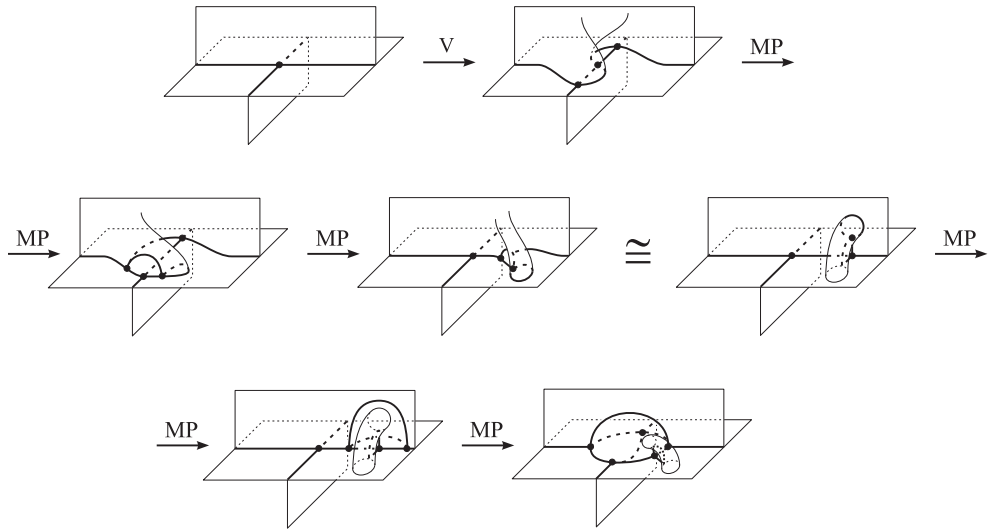}}
  \caption{Each positive C-move is a composition of V-~and MP-moves.}
  \label{C_comp_V_MP:fig}
\end{figure}
Note also that $12$ different positive C-moves can be applied at each vertex.
We are now able to prove Lemma~\ref{super_standard:lem}.

\begin{proof}[Proof of Lemma~\ref{super_standard:lem}]
First of all, for each region of $P$ that is incident to $X$ along more than one edge of $X$ (say $m$), we apply $m-1$ suitable L-moves, so that each new region is incident to at most one edge of $X$.
Call $P'$ the skeleton just obtained.

Let us call $R_i^{(j)}$, with $j=1,2,3$, the three regions incident to the component $X_i$.
Note that we have $R_i^{(j)} \neq R_l^{(k)}$ if $i \neq l$ or $j \neq k$.
Note also that each $R_i^{(j)}$ is adjacent to $R_i^{(k)}$, with $k \neq j$, along edges of $P'$ with an endpoint on $X_i$.
All the adjacencies along edges between any $R_i^{(j)}$ and $R_l^{(k)}$ that are not of this type will be called {\em bad}.

We transform $P'$, via {\rm L}-~and {\rm MP}-moves, into another standard skeleton $P'' = Q \cup Q'$, where $Q$ is a quasi-standard polyhedron without boundary and $Q' \cong X \times [0,1)$ is made up of the regions of $P$ incident to $X$.
In order to do this, it is enough to eliminate all bad adjacencies.
Suppose there is a bad adjacency between $R_i^{(j)}$ and $R_l^{(k)}$, as shown in Fig.~\ref{non_good_adj:fig};
we apply a C-move, an MP-move and an L-move, as shown in Fig.~\ref{non_good_adj_elimin:fig}.
\begin{figure}
  \centerline{\includegraphics{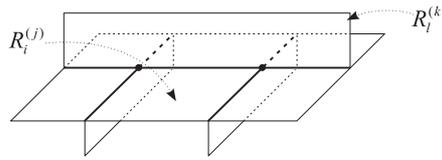}}
  \caption{A bad adjacency.}
  \label{non_good_adj:fig}
\end{figure}
\begin{figure}
  \centerline{\includegraphics{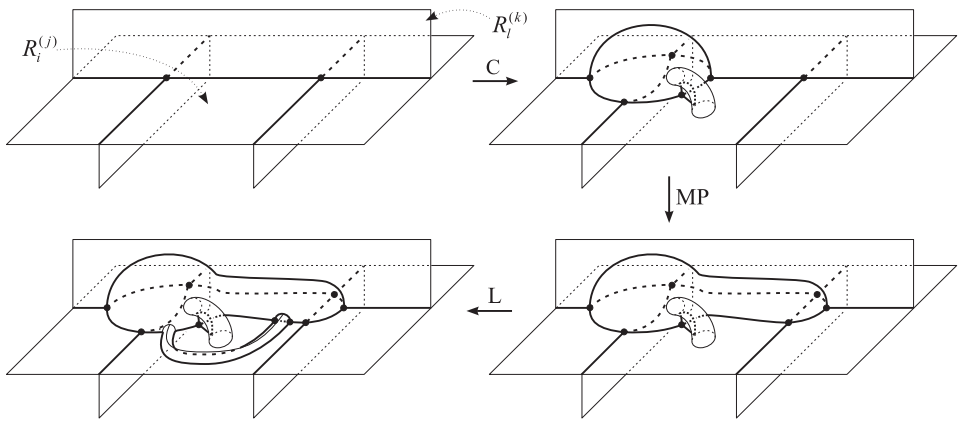}}
  \caption{Elimination of a bad adjacency.}
  \label{non_good_adj_elimin:fig}
\end{figure}
By repeating this procedure for each bad adjacency, we eliminate them (note that we do not create new ones) and hence we obtain the new skeleton $P''$ we are looking for.

The last step consists in making $Q$ standard via {\rm L}-~and {\rm MP}-moves.
First of all, we note that $Q$ is connected: in fact $P''$, which is connected, retracts by deformation onto $Q$.
We will modify $Q$ and $Q'$, but we will continue to call the polyhedra we get $Q$ and $Q'$, for the sake of shortness.
If $Q$ is a surface, we create a singularity by applying a positive C-move on a vertex of $P''$.
Note that we have modified both $Q$ and $Q'$, but we have left $Q'$ homeomorphic to $X \times [0,1)$.
If $Q$ has no vertex, we create one by applying a positive C-move as above.

We will finally transform $Q$ in order to have that all the 2-dimensional components of $Q$ are discs.
In order to divide suitably all the 2-dimensional components that are not discs, we consider a collection of disjoint simple arcs $\beta = \{\beta_1,\ldots,\beta_r\}$ that are contained in $Q$, that divide the 2-dimensional components of $Q$ into discs, and that are in general position with respect to $Q'$.
For each $\beta_i$, we apply L-moves as shown in Fig.~\ref{divide_regions:fig}.
\begin{figure}
  \centerline{\includegraphics{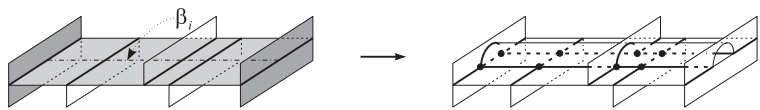}}
  \caption{How to divide the 2-dimensional components of $Q$ that are not discs (on the left, $Q$ is drawn coloured).}
  \label{divide_regions:fig}
\end{figure}
As above, note that we have modified both $Q$ and $Q'$, but we have left $Q'$ homeomorphic to $X \times [0,1)$.
Now, the polyhedron $Q$ is quasi-standard, it has vertices and all its 2-dimensional components are discs; therefore, $Q$ is standard, the skeleton just obtained is super-standard, and the proof is complete.
\end{proof}

\subsection{Calculus with fixed octopus}

The following result gives a set of moves for standard skeleta of $(M,X)$ with the same octopus.
\begin{proposition}\label{fixed_octopus:prop}
Any two standard skeleta of $(M,X)$ with the same octopus can be obtained from each other via a sequence of {\rm L}-~and {\rm MP}-moves.
\end{proposition}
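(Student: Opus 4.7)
My plan is to reduce to super-standard skeleta via Lemma~\ref{super_standard:lem} and then apply the Matveev--Piergallini theorem to the ``interior'' parts.

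Applying Lemma~\ref{super_standard:lem}, I may assume both $P_1$ and $P_2$ are super-standard, writing $P_i = Q_i \cup (X \times [0,1))$ with $Q_i$ standard and without boundary. The key observation is that each $Q_i$ is itself a standard spine without boundary of $M \setminus B$, where $B$ is the $3$-ball dual to the head of the octopus in the ideal triangulation $\calT(P_i \cup \partial M)$. Indeed, $P_i \cup \partial M$ is by definition a standard spine of $M \setminus B$; after collapsing each disc $C_j \setminus X_j$ of $\partial M$ onto $X_j$ and then collapsing the collar $X \times [0,1)$ from its now-free edges along $\partial P_i = X$ onto the inner end $X \times \{1\} \subset Q_i$, we see that $M \setminus B$ collapses to $Q_i$. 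At each point of $X \times \{1\}$, removing the collar sheet from the three sheets of $P_i$ leaves two sheets forming a smooth $2$-manifold, so $Q_i$ is indeed standard and without boundary.

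Because $o(P_1) = o(P_2)$, the two heads (and hence the two balls $B$) are isotopic, so I may assume $Q_1$ and $Q_2$ are standard spines without boundary of the same manifold $M \setminus B$. By Theorem~\ref{MP_calculus:teo}, they are related by a sequence of V-~and MP-moves. Each such move can be realised in a small ball in $M \setminus B$, and by a general-position isotopy I may assume this ball is disjoint from the collar $X \times [0,1) \subset P_i$; it then induces an MP-move or a V-move on $P_i = Q_i \cup (X \times [0,1))$, and a V-move on $P_i$ is a special case of an L-move. Combining with the L-~and MP-moves used in the super-standardisation step gives the desired sequence relating $P_1$ to $P_2$.

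The main obstacle I anticipate is the isotopy argument in the final step: one must verify that each Matveev--Piergallini move on $Q_i$ can be isotoped away from the $1$-dimensional attachment locus $X \times \{1\}$, and that such isotopies can be carried out coherently along the whole sequence (so that earlier moves do not force later ones back onto $X \times \{1\}$). When a move cannot be isotoped off, one should be prepared to replace a V-move with a genuine L-move on $P_i$, using the freedom allowed by the non-local nature of the L-move described in Section~\ref{first_moves:subsec}.
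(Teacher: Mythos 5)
Your first step (super-standardisation via Lemma~\ref{super_standard:lem}) coincides with the paper's, but your second step diverges from the paper's and contains two real problems. First, $Q_i$ is \emph{not} a spine of $M \setminus B$. Removing the boundary regions $C_j \setminus X_j$ and the collar regions $X \times [0,1)$ from the polyhedron $P_i \cup \partial M$ adds them to the complement, so $M \setminus Q_i$ is a single connected open set containing the ball $B$, all of $\partial M$, and the tubes joining them along the tentacles; in other words $Q_i$ is a standard spine of $M$ minus an open regular neighbourhood of $\partial M \cup o(P_i)$, a manifold with one connected higher-genus boundary component, whereas $M \setminus B$ has $n+1$ boundary components. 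Your collapsing argument cannot be carried out: the closed surface $C_j$ has no free face, so $C_j \setminus X_j$ cannot be collapsed onto $X_j$. The misidentification also obscures why the hypothesis $o(P_1)=o(P_2)$ is needed at all, since the manifold of which $Q_i$ is a spine depends on the whole octopus and not just on its head. (With the corrected identification Matveev--Piergallini still applies to $Q_1$ and $Q_2$ as spines of the same manifold, so this part is repairable.)

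The essential gap is the lifting step. The collar $X \times [0,1)$ is attached to $Q_i$ along an embedded trivalent graph $G_i \cong X$ contained in the singular set of $P_i$, and this attachment is genuine data: two super-standard skeleta with homeomorphic $Q$ but differently positioned attachment graphs are different skeleta of $(M,X)$. Theorem~\ref{MP_calculus:teo} relates $Q_1$ and $Q_2$ only as abstract spines, and says nothing about the position of $G_1$ relative to $G_2$. Even if each individual V-/MP-move could be pushed off the attachment locus (which cannot be guaranteed: a negative MP-move requires a specific configuration of edges that may be forced to involve edges of $G_i$), at the end of the sequence one would still have to slide the transported attachment graph into the position $G_2$, and sliding the collar across regions of $Q_2$ is exactly the operation that requires genuinely non-local L-moves (cf.\ the Remark at the end of Section~\ref{first_moves:subsec}); replacing V-moves by L-moves does not address either point. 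This relative control is the actual content of the proposition, and the paper obtains it differently: after super-standardisation it invokes Theorem~6.4.B of~\cite{Turaev-Viro}, a calculus for skeleta with prescribed boundary which yields a sequence of L-, MP- and ``false'' L-moves, and then removes the false L-moves by the technique of Theorem~1.2.30 of~\cite{Matveev:new:book}. Your argument would need an analogous relative ingredient to close the gap.
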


Before turning into the proof, we state a corollary of Remark~\ref{one_octopus:rem} and Proposition~\ref{fixed_octopus:prop}.
\begin{corollary}
If a manifold with marked boundary $(M,X)$ has only one boundary component, then any two standard skeleta of $(M,X)$ can be obtained from each other via a sequence of {\rm L}-~and {\rm MP}-moves.
\end{corollary}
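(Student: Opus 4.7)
The plan is to bootstrap the Matveev--Piergallini calculus (Theorem~\ref{MP_calculus:teo}) for standard spines without boundary, applied to $P_1\cup\partial M$ and $P_2\cup\partial M$, into a calculus on the skeleta $P_1,P_2$ themselves.

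First, using Lemma~\ref{super_standard:lem}, via L- and MP-moves I may assume that both $P_1$ and $P_2$ are super-standard, so $P_i=Q_i\cup(X\times[0,1))$ with $Q_i$ standard without boundary. By Remark~\ref{invariance_octopus:rem} this reduction preserves the common octopus $o=o(P_1)=o(P_2)$. Now $P_1\cup\partial M$ and $P_2\cup\partial M$ are standard spines without boundary of the same manifold $M\setminus B$, where $B$ is a small ball in a regular neighbourhood of the head of $o$, and both contain $\partial M$ as a sub-polyhedron with the same regions $C_i\setminus X_i$.

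After applying positive MP-moves if necessary to ensure at least two vertices on each side, Theorem~\ref{MP_calculus:teo} produces a sequence of MP-moves
\[
P_1\cup\partial M=S_0\to S_1\to\cdots\to S_k=P_2\cup\partial M.
\]
A single MP-move, dual to a $2$-$3$ Pachner move, only adds or removes the central region (equivalently, the central edge of the ideal triangulation); by the octopus invariance (Remark~\ref{invariance_octopus:rem}) this central region cannot coincide with any of the $C_i\setminus X_i$. Therefore every boundary region $C_i\setminus X_i$ persists through the whole sequence, and each $S_j$ has the form $P'_j\cup\partial M$ for a standard skeleton $P'_j$ of $(M,X)$ still carrying the octopus $o$.

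The remaining task is to convert each step $S_j\to S_{j+1}$ into a sequence of MP- and L-moves from $P'_j$ to $P'_{j+1}$ on the skeleta themselves. If the ball in which the MP-move takes place is disjoint from $\partial M$, the move is already an MP-move on $P'_j$ and nothing has to be done. Otherwise it involves one or more of the boundary regions $C_i\setminus X_i$; I expect such a move to be realised on $P'_j$ by first applying L-moves to locally redistribute the regions incident to the collar $X\times[0,1)$, then performing an interior MP-move, then L-moves restoring the collar incidence pattern so that the resulting skeleton together with $\partial M$ is exactly $S_{j+1}$.

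The main obstacle is this last case analysis, enumerating how many of the nine edges of the two tetrahedra of the local $2$-$3$ Pachner move can be tentacle edges $\alpha_i$ of the octopus, and exhibiting in each case the explicit short sequence of L- and MP-moves on $P$ that reproduces the effect of the MP-move on $P\cup\partial M$. The super-standard form keeps the combinatorics around the collar predictable (hence the enumeration finite), and the L-move is exactly the mechanism needed to reroute regions adjacent to the boundary collar in the way required by boundary-touching MP-moves.
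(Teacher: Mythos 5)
Your proposal has a genuine gap at its central step. After invoking Theorem~\ref{MP_calculus:teo} to get a sequence of MP-moves $S_0\to\cdots\to S_k$ relating $P_1\cup\partial M$ to $P_2\cup\partial M$ as spines of $M\setminus B$, you assert that every boundary region $C_i\setminus X_i$ ``persists through the whole sequence'' and that each intermediate spine has the form $P'_j\cup\partial M$ for a skeleton $P'_j$ of $(M,X)$. Nothing guarantees this. The Matveev--Piergallini theorem is a pure existence statement: the sequence it produces is uncontrolled with respect to the subpolyhedron $\partial M$, and a single positive MP-move performed along an edge of some $X_i$ (which is an edge of $S_j$) already yields a spine of $M\setminus B$ that no longer contains $\partial M$ as a union of regions. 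The appeal to Remark~\ref{invariance_octopus:rem} to exclude this is not legitimate: that remark concerns L- and MP-moves applied to skeleta $P$ of $(M,X)$, not arbitrary MP-moves applied to the closed spine $P\cup\partial M$ of $M\setminus B$, so it cannot constrain the sequence. The whole difficulty of passing from the absolute calculus to the relative one is concentrated exactly here, and your final paragraph (``I expect such a move to be realised\dots'') acknowledges rather than closes the gap; to carry out that case analysis you would first need to show that the sequence can be chosen to respect $\partial M$, which is essentially the statement you are trying to prove.

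The paper's route is different and much shorter. For the corollary itself it simply combines Remark~\ref{one_octopus:rem} (one boundary component implies a unique octopus, so $o(P_1)=o(P_2)$ holds automatically) with Proposition~\ref{fixed_octopus:prop}. The real content therefore sits in Proposition~\ref{fixed_octopus:prop}, whose proof does use Lemma~\ref{super_standard:lem} as you do, but then quotes a genuinely \emph{relative} calculus --- Theorem~6.4.B of Turaev--Viro --- which directly yields a sequence of L-, MP- and ``false'' L-moves keeping the boundary graph fixed throughout, and finally eliminates the false L-moves by Matveev's technique. If you want your argument to go through, the honest fix is to invoke Proposition~\ref{fixed_octopus:prop} (or the Turaev--Viro relative theorem behind it) rather than trying to bootstrap the absolute Theorem~\ref{MP_calculus:teo}.
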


\begin{proof}[Proof of Proposition~\ref{fixed_octopus:prop}]
Let $P_1$ and $P_2$ be two standard skeleta of $(M,X)$ such that $o(P_1) = o(P_2)$.
Let $N(o)$ be a regular neighbourhood of the octopus $o = o(P_1) = o(P_2)$.
By Lemma~\ref{super_standard:lem}, we can transform each skeleton $P_i$ into a super-standard one, say $P'_i$, via L-~and MP-moves.
By virtue of Remark~\ref{invariance_octopus:rem}, we have $o(P_i) = o(P'_i) = o$.

Theorem~6.4.B of~\cite{Turaev-Viro} implies that there is a sequence of L-,~MP- and false L-moves transforming $P'_1$ into $P'_2$, where a {\em false} L-move is a negative L-move not preserving standardness (actually, with our definition of L-move, it is not an L-move).
In order to eliminate the false L-moves, we can use the same technique used in the proof of Theorem~1.2.30 of~\cite{Matveev:new:book}, by obviously generalising the setting from spines to skeleta.
Eventually, we obtain a sequence of L-~and MP-moves only, transforming $P'_1$ into $P'_2$.
The proof is complete.
\end{proof}

\section{Changing the octopus}\label{sec:changing_octopus}

In the section above, we have dealt only with moves that do not change the octopus.
The aim of this section is to describe moves that do change the octopus.

The idea is to change tentacles one by one.
We will define CR-, \Tone- and \Ttwo-moves, which are particular types of disc-replacements.
Roughly speaking, a disc-replacement on a skeleton consists in adding a disc and removing another one.
For CR-, \Tone- and \Ttwo-moves, this yields the (suitable) replacement of the tentacle dual to the added disc by the tentacle dual to the removed disc (actually, only a part of the tentacle is dual to the disc).
Here by ``suitable'' replacement we mean a change of crossing for a CR-move and a generic change of tentacle for a \Ti-move.
Hence, in order to change a crossing or a tentacle, the idea is to modify the standard skeleton to reach a configuration where the CR-, the \Tone- or the \Ttwo-move we need can be applied.
We will now go into detail.

\subsection{Disc-replacement}\label{disc_replacement:subsec}

Let $P$ be a standard skeleton of $(M,X)$.
An {\em external disc} ({\em for $P$}) is a closed disc $D$ such that $D \cap P = \partial D$, the boundary $\partial D$ is in general position
with respect to the singularities of $P$, and $D \setminus \partial D$ is embedded (in $\interior{M} \setminus P$).
The disc $D$ divides the open ball $M \setminus (P \cup \partial M)$ into two balls, say $B_1$ and $B_2$.
Let now $D'$ be a disc contained in $P \cup D$, adjacent to both $B_1$ and $B_2$, and such that the polyhedron $P' = (P \cup D) \setminus
D'$ is standard.
Note that we have $\partial P'=\partial P$.
We have that $M \setminus (P' \cup \partial M) = B_1 \cup B_2 \cup D'$ is a ball and that $P' \cap \partial M$ coincides with $X$ and $\partial P'$, so $P'$ is a standard skeleton of $(M,X)$.
The move from $P$ to $P'$ will be called a {\em disc-replacement}.

\begin{remark}\label{L_MP_disc_replacement:rem}
Each L-~and MP-move is a particular disc-replacement.
\end{remark}

\begin{remark}
A more general version of disc-replacement has been already considered by Matveev for spines~\cite{Matveev:new:book}.
It is called a {\em disc replacement move}.
\end{remark}

In the sections below, we will show that three particular disc-replacements (the CR-, the \Tone- and the \Ttwo-move) are enough to complete the sets of moves.
Therefore, we will obtain the following corollary of Theorem~\ref{only_change_tentacle:teo} below and Remark~\ref{L_MP_disc_replacement:rem}.
\begin{corollary}
Any two standard skeleta of $(M,X)$ can be obtained from each other via a sequence of disc-replacements.
\end{corollary}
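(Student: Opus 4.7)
The plan is to deduce this corollary directly by invoking Theorem~\ref{only_change_tentacle:teo} (which, although it appears later in the paper, asserts that any two standard skeleta of $(M,X)$ can be related by a finite sequence of L-, MP-, CR-, \Tone- and \Ttwo-moves) together with the observation that each of the five admissible moves is a particular instance of a disc-replacement as defined in Section~\ref{disc_replacement:subsec}. In this sense there is essentially no new work to do: the corollary is just a repackaging of the main theorem in the common language of disc-replacements.

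Concretely, given two standard skeleta $P$ and $P'$ of $(M,X)$, I would first apply Theorem~\ref{only_change_tentacle:teo} to obtain a finite sequence $P = P_0, P_1, \ldots, P_k = P'$ of standard skeleta of $(M,X)$ such that each consecutive pair $P_{i-1}, P_i$ is related by one of the five moves. I would then argue move-by-move that each such step is a disc-replacement: for the L- and MP-moves this is exactly the content of Remark~\ref{L_MP_disc_replacement:rem}, while for the CR-, \Tone- and \Ttwo-moves it is the very way these moves are constructed in Section~\ref{sec:changing_octopus} (each is defined to be a particular disc-replacement in which one external disc is added and one existing disc is removed, with the effect of modifying a single tentacle of the octopus). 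Concatenating these disc-replacements gives the required sequence from $P$ to $P'$.

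The only thing that needs a moment of care is the bookkeeping that a negative L-move is declared, by convention, to be performed only when it preserves standardness; since a disc-replacement has been defined precisely to take standard skeleta to standard skeleta, this convention aligns perfectly and no extra argument is required. Thus the corollary will follow as a one-line consequence of Theorem~\ref{only_change_tentacle:teo} and Remark~\ref{L_MP_disc_replacement:rem}, and the main ``obstacle'' is not located here at all but in the proof of Theorem~\ref{only_change_tentacle:teo} itself, which supplies the non-trivial moves needed to change the octopus.
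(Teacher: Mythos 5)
Your proposal is correct and is essentially identical to the paper's own argument: the corollary is stated as an immediate consequence of Theorem~\ref{only_change_tentacle:teo} together with Remark~\ref{L_MP_disc_replacement:rem} and the observation that each \Ti-move is a disc-replacement. One small inaccuracy: Theorem~\ref{only_change_tentacle:teo} uses only L-, MP- and \Ti-moves (not CR-moves), but since the CR-move is also a disc-replacement this does not affect the validity of your argument.
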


\paragraph{Alteration of the octopus in a particular case}
Consider the open ball $B(P)=M\setminus(P\cup\partial M)$, which is embedded in $M$.
The closure of $B(P)$ is not embedded, so consider an unfolded version of it (say $\bar{B}(P)$).
Its boundary is divided into discs corresponding to either regions of $P$ or boundary components of $M$.
Each region of $P$ appears twice in $\partial\bar{B}(P)$, while each boundary component of $M$ appears only once.

We will now describe how the octopus changes when a particular disc-replacement is carried out.
We assume that $D$ intersects the octopus $o(P)$ once (a greater number of intersections leads to a more complicated alteration of the octopus, which will not be necessary below).
Call $\tau_i$ the tentacle intersecting $D$.
The external disc $D$ divides the ball $B(P)$ into two balls $B_1$ and $B_2$, where $B_2$ is the ball adjacent to the boundary component $C_i$ of $\partial M$ corresponding to the tentacle $\tau_i$.
Call $\alpha$ the edge dual to the region of $P$ containing $D'$, and note that it is divided by $D'$ in two sub-arcs (say $\alpha_1$ and $\alpha_2$).
Note that all the tentacles of $o(P)$ and the two arcs $\alpha_*$ are trivial in $\bar{B}(P)$, hence we can suppose that they are radial in $\bar{B}(P)$.
Moreover, we assume that $\alpha_1$ does not intersect $D$ up to isotopy.
In Fig.~\ref{octopus_ball:fig}-left a 2-dimensional picture is shown (in this example $n=3$ and $i=1$).
\begin{figure}
  \centerline{\includegraphics{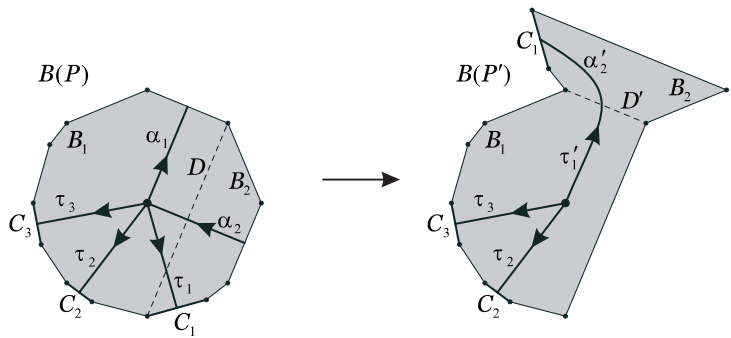}}
  \caption{How a disc-replacement alters a tentacle of the octopus (2-dimensional picture), an example with $n=3$.}
  \label{octopus_ball:fig}
\end{figure}
After the disc-replacement the ball $B(P')=M\setminus(P'\cup\partial M)$ is obtained by cutting out $B_2$ from $B(P)$ and by pasting it back to $B_1$ along $D'$.
A 2-dimensional picture of the unfolded version of $B(P')$ is shown in Fig.~\ref{octopus_ball:fig}-right.
Note that the octopus $o(P')$ has the same tentacles as $o(P)$, except for $\tau'_i$ which differs from $\tau_i$.
More precisely, $\tau'_i$ can be constructed by adding to $\alpha_1$ any arc $\alpha'_2$ that is trivial in $B_2$, starts from the endpoint of $\alpha_1$ and ends in $C_i\setminus X_i$.

\subsection{Changing the crossings}

In this section, we will describe the CR-move (which is a particular disc-re\-place\-ment) which may change the octopus, but only ``locally'': namely, this move will allow us only to change the crossings of the octopuses.

Let $o$ be an octopus and let $\varphi : [0,1] \rightarrow \interior{M}$ be a simple arc such that $\varphi([0,1]) \cap o = \{\varphi(0),\varphi(1)\}$.
Fix a trivialisation $D^3\cong D^2\times I$ of a regular neighbourhood of $\varphi([0,1])$.
We call a {\em change of crossing} the modification of $o$ in $D^3$ shown in Fig.~\ref{change_crossing:fig}.
\begin{figure}
  \centerline{\includegraphics{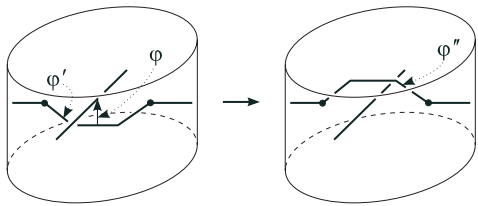}}
  \caption{The change of crossing.}
  \label{change_crossing:fig}
\end{figure}
Namely, we have replaced the arc $\varphi'$ with the arc $\varphi''$.
Note that the change of crossing modifies only a little regular neighbourhood of $\varphi([0,1])$ and does not depend on the orientation of $\varphi$.

\begin{remark}\label{one_octopus_up_crossing:rem}
If $M$ is $T\times I$ or $K\times I$, then it has only one octopus up to changes of crossings.
\end{remark}

\paragraph{{\rm CR}-move}
Let $P$ be a standard skeleton of $(M,X)$.
We call a CR{\em -move} the move shown in Fig.~\ref{CR_move:fig}.
\begin{figure}
  \centerline{\includegraphics{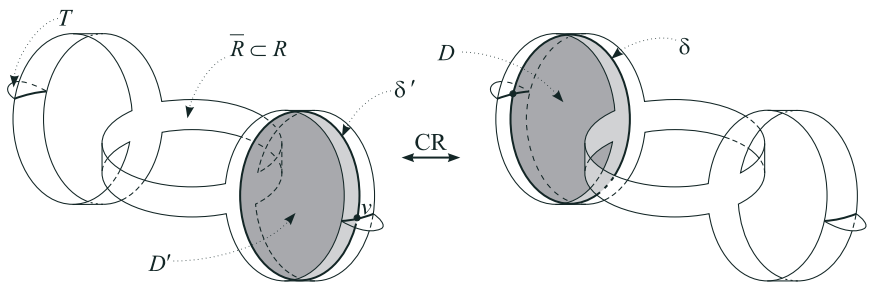}}
  \caption{The CR-move.}
  \label{CR_move:fig}
\end{figure}
Let us call $P'$ the polyhedron obtained after the CR-move.
Since this move is non-local, it must be described with some care.
Let us consider a region $D'$ incident to one vertex only, say $v$, and to one region only, say $R$, along the circle $\delta'$.
We suppose moreover that an unfolded version of $R$ appears as in Fig.~\ref{unfolded_version_R:fig}.
\begin{figure}
  \centerline{\includegraphics{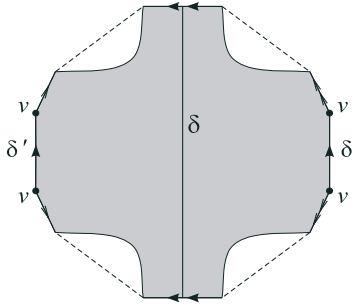}}
  \caption{Unfolded version of $R$.
  The coloured cross is an unfolded version of the part of $R$ drawn in Fig.~\ref{CR_move:fig}.}
  \label{unfolded_version_R:fig}
\end{figure}
Note that the folded version of the cross coloured in Fig.~\ref{unfolded_version_R:fig}, say $\overline{R}$, is exactly the part of $R$ involved in the move (see again Fig.~\ref{CR_move:fig}).
We suppose that the folded version is transversely orientable and that the little tongue $T$ lies on the other side of $\overline{R}$ with respect to $D'$.
The folded version of the arc $\delta$, which is a circle, bounds an external disc $D$ lying on the other side of $\overline{R}$ with respect to the little tongue $T$.
The move consists in replacing the disc $D'$ with the external disc $D$ (see Fig.~\ref{CR_move:fig}).
Note that a CR-move can lead to a non-standard polyhedron.
To avoid this loss of standardness, we will call CR-moves only those preserving standardness.
With this convention, if we apply a CR-move to a standard skeleton of $(M,X)$, the result will be another standard skeleton of $(M,X)$.
Note that each CR-move is a disc-replacement.

\paragraph{Changing the crossings}
The following result states that CR-moves are enough to change the crossings.
\begin{proposition}\label{change_crossing:prop}
Let $P_1$ and $P_2$ be standard skeleta of $(M,X)$ such that $o(P_2)$ is obtained from $o(P_1)$ via changes of crossing.
Then, $P_2$ can be obtained from $P_1$ via {\rm L}-,~{\rm MP}-~and {\rm CR}-moves.
\end{proposition}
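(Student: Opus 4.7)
The plan is to induct on the minimal number $k$ of changes of crossing needed to transform $o(P_1)$ into $o(P_2)$. For $k=0$ the two skeleta share the same octopus, so Proposition~\ref{fixed_octopus:prop} gives a sequence of L- and MP-moves between them. For the inductive step it suffices to treat the case $k=1$: $o(P_2)$ is obtained from $o(P_1)$ by a single change of crossing along an arc $\varphi$ with endpoints on $o(P_1)$.

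First I would apply L- and MP-moves to $P_1$, which preserve the octopus by Remark~\ref{invariance_octopus:rem}, to obtain a standard skeleton $Q$ of $(M,X)$ with $o(Q)=o(P_1)$ and such that $\varphi$ sits inside a region of $Q$ as (part of) the dual edge of some face of $\calT(Q\cup\partial M)$. This is the content, appropriately adapted, of Lemma~\ref{existence:lem}: after concatenating $\varphi$ with small segments along the two tentacles it meets and with a loop based at the head of $o(P_1)$, we obtain an admissible loop that the lemma realises as a dual edge of a standard skeleton with the prescribed octopus; the original skeleton $P_1$ is related to this one via Proposition~\ref{fixed_octopus:prop}.

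Next I would use further L- and MP-moves to put a neighbourhood of $\varphi$ in $Q$ into the precise combinatorial configuration pictured in Fig.~\ref{CR_move:fig}: a region $D'$ incident to a single vertex $v$ along a circle $\delta'$ and to a single region $R$ whose unfolded cross appears as in Fig.~\ref{unfolded_version_R:fig}, with the little tongue $T$ on the side of $\overline{R}$ opposite to $D'$. Once this configuration is reached, applying the CR-move produces a standard skeleton $Q^{*}$ of $(M,X)$. The external disc $D$ of the CR-move meets $o(Q)$ transversely in the single point corresponding to the crossing, so the general disc-replacement analysis of Section~\ref{disc_replacement:subsec} shows that the CR-move changes only the affected tentacle and in exactly the prescribed way, giving $o(Q^{*})=o(P_2)$. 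A final application of Proposition~\ref{fixed_octopus:prop} transforms $Q^{*}$ into $P_2$ via L- and MP-moves, closing the induction.

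The main obstacle is the configuration-reaching step just described: starting from a generic $Q$, the local picture around $\varphi$ may differ from the CR-ready configuration in several ways, and massaging it into the exact form shown in Fig.~\ref{CR_move:fig} requires introducing new vertices and regions via C-moves (themselves compositions of V- and MP-moves) and separating the relevant incident regions by carefully chosen L-moves, exactly in the spirit of the elimination of bad adjacencies and the subdivision of non-disc components carried out in the proof of Lemma~\ref{super_standard:lem}. This is a finite but delicate local bookkeeping, and constitutes the technical heart of the proposition.
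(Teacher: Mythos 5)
Your outline matches the paper's: reduce to a single change of crossing, use Lemma~\ref{existence:lem} together with Proposition~\ref{fixed_octopus:prop} to realise a loop through $\varphi$ (built from arcs parallel to the two tentacle pieces $\tau_1^h$, $\tau_2^h$ and a piece of $\varphi$) as a dual edge with dual region $R$, reach a CR-ready configuration, apply the CR-move, verify the octopus changes as prescribed via the disc-replacement analysis of Section~\ref{disc_replacement:subsec}, and close with Proposition~\ref{fixed_octopus:prop}. However, the step you yourself flag as ``the technical heart'' --- reaching the exact configuration of Fig.~\ref{CR_move:fig} --- is left as an acknowledged gap, and the method you sketch for it (directly massaging $Q$ near $\varphi$ by C-moves and L-moves in the spirit of Lemma~\ref{super_standard:lem}) is not what the paper does and would be genuinely hard to carry out: you would have to exhibit an explicit sequence of local moves producing both the small region $D'$ and the external disc $D$ in the correct relative position, and there is no a priori reason such a sequence is ``local bookkeeping.''

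The paper sidesteps this entirely with a construction-then-compare trick that your proposal is missing. It does not transform $P_1$ into the CR-ready skeleton by explicit moves; instead it \emph{builds} that skeleton directly: it projects the part $\overline{\varphi}''$ of $\varphi''$ lying over $R$ along the retraction $\pi$ of $M\setminus N(o(P_1))$ onto $P_1$ to obtain a disc $\Phi$, adds $\Phi$ and a small neighbourhood $N(\overline{\varphi}'')$ to $P_1\cup\partial M$, and collapses to get a standard skeleton $P'_1$ containing the region $D'$ (Step~1); then it projects the arc under $R$ to a curve $\delta$ and applies L-moves along $\delta$ until $\delta$ meets $S(P)$ in a single point, producing $P''_1$ and the external disc $D$ (Step~2). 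Crucially, $o(P''_1)=o(P_1)$ by construction, so Proposition~\ref{fixed_octopus:prop} guarantees \emph{for free} that $P''_1$ is obtainable from $P_1$ by L- and MP-moves --- no explicit move sequence is ever needed. Without this use of the fixed-octopus calculus as a bridge between $P_1$ and the constructed configuration, your argument does not close; as written, the central step remains unproved.
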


Before turning into the proof, we state a corollary of Remark~\ref{one_octopus_up_crossing:rem} and Proposition~\ref{change_crossing:prop}.
\begin{corollary}
If $M$ is $T\times I$ or $K\times I$, then any two standard skeleta of $(M,X)$ can be obtained from each other via a sequence of {\rm L}-,~{\rm MP}-~and {\rm CR}-moves.
\end{corollary}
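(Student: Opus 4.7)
The plan is induction on the number $k$ of changes of crossing needed to transform $o(P_1)$ into $o(P_2)$. When $k=0$, the two octopuses coincide and Proposition~\ref{fixed_octopus:prop} immediately gives a sequence of L- and MP-moves. For the inductive step it suffices to realise a single change of crossing: if I can produce, for any standard skeleton $P$ of $(M,X)$ with octopus $o$ and any arc $\varphi$ prescribing a change of crossing into a new octopus $o'$, a standard skeleton $P'$ with $o(P')=o'$ related to $P$ by L-, MP- and CR-moves, then peeling off one crossing at a time and appealing to the inductive hypothesis finishes the argument.

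The heart of the matter is the single-crossing step. By Proposition~\ref{fixed_octopus:prop}, I am free to replace $P$ by any standard skeleton with the same octopus, so the task becomes: \emph{build} a convenient standard skeleton $\tilde{P}$ of $(M,X)$ with $o(\tilde{P})=o$ that is already in the geometric configuration required to apply one CR-move at $\varphi$. I construct $\tilde{P}$ via Lemma~\ref{existence:lem}: I take a simple loop $\psi$ based at the head of $o$ which runs along suitable tentacles of $o$ out to near the two endpoints of $\varphi$, parallel to $\varphi$ in its middle portion, and back to the head along another tentacle. Lemma~\ref{existence:lem} yields a standard skeleton $\tilde{P}$ with $o(\tilde{P})=o$ such that $\psi$ is an edge of the dual ideal triangulation $\calT(\tilde{P}\cup\partial M)$; the dual region $R\subset\tilde{P}$ then runs alongside $\varphi$.

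Using a few further L-moves inside a regular neighbourhood of $R$, I subdivide $R$ and introduce the small tongue $T$ of Fig.~\ref{CR_move:fig} so as to produce a region $D'$ incident to one vertex and attached to $R$ along a circle $\delta'$, with $T$ lying on the side of $\overline{R}$ opposite to the external disc $D$ bounded by $\delta$. By construction $D$ intersects $o(\tilde{P})$ transversely in exactly one point, on the tentacle passing through $\varphi$. The resulting configuration meets the hypotheses of a CR-move; applying it replaces $D'$ by $D$ and yields a standard skeleton $\tilde{P}'$. The analysis of Section~\ref{disc_replacement:subsec} tells me exactly how the tentacle is altered: the new tentacle is $\alpha_1$ followed by an arc $\alpha'_2$ through the opposite side of $R$, which, thanks to our choice of the tongue's side, is precisely the swapped over/under of $\varphi$. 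Hence $o(\tilde{P}')=o'$, and setting $P'=\tilde{P}'$ closes the inductive step.

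The main obstacle I foresee is geometric bookkeeping in the construction of $\tilde{P}$: one must verify that the loop $\psi$, the orientation of the tongue $T$, and the auxiliary L-moves can simultaneously be chosen so that the external disc $D$ (i) is embedded, (ii) meets $o(\tilde{P})$ exactly once, and (iii) is routed so that the replacement $\tau_i\mapsto\tau'_i$ coincides with the change of crossing along $\varphi$ rather than some homotopically distinct perturbation of the tentacle. The flexibility in choosing $\psi$ in Lemma~\ref{existence:lem}, combined with the ability to L- and MP-modify $\tilde{P}$ freely inside a regular neighbourhood of $\psi$, is what makes this arrangement possible.
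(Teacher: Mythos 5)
Your single-crossing step is, in substance, a re-derivation of Proposition~\ref{change_crossing:prop}: the paper's own proof of that proposition follows the same outline you sketch (use Lemma~\ref{existence:lem} and Proposition~\ref{fixed_octopus:prop} to replace the skeleton by one in which a loop running parallel to the tentacles and to $\varphi$ is a dual edge, then build the discs $D'$ and $D$ so that replacing one by the other is a CR-move realising the prescribed crossing change, checking the effect on the tentacle via the analysis of Section~\ref{disc_replacement:subsec}). Since that proposition is already available, this part of your argument is redundant rather than wrong; the paper simply cites it, and your inductive ``one crossing at a time'' reduction is exactly the first line of its proof.

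The genuine gap lies elsewhere. Your induction is on ``the number $k$ of changes of crossing needed to transform $o(P_1)$ into $o(P_2)$'', which tacitly assumes that $o(P_2)$ can be obtained from $o(P_1)$ by changes of crossing at all. Changes of crossing realise precisely homotopies of octopuses, so two octopuses in different homotopy classes are \emph{not} related by any finite sequence of them; for a general $(M,X)$ such pairs exist, which is exactly why the paper goes on to introduce the \Ti-moves. Your argument never uses the hypothesis that $M$ is $T\times I$ or $K\times I$, so as written it would ``prove'' the corollary for every $(M,X)$ --- a false statement. The missing ingredient is the content of Remark~\ref{one_octopus_up_crossing:rem}: for $T\times I$ and $K\times I$ there is only one octopus up to changes of crossing. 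You must either cite or justify this fact; once it is in place, the corollary follows at once from Proposition~\ref{change_crossing:prop}, with no induction and no new geometric construction needed.
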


\begin{proof}[Proof of Proposition~\ref{change_crossing:prop}]
Obviously, it is enough to prove that if $o(P_2)$ is obtained from $o(P_1)$ via one change of crossing, then
$P_2$ can be obtained from $P_1$ via \mbox{{\rm L}-,}~{\rm MP}-~and {\rm CR}-moves.
Hence, let us suppose that $o(P_2)$ is obtained from $o(P_1)$ via one change of crossing.
We use the same notation as that of Fig.~\ref{change_crossing:fig}.

Call $\tau_1$ and $\tau_2$ the tentacles involved in the change of crossing (where $\tau_1$  contains $\varphi'$).
They are divided by $\varphi([0,1])$ in two components: call $\tau_i^{h}$ (resp.~$\tau_i^{b}$) the component of $\tau_i$ incident to the head of the octopus (resp.~to the boundary of $M$), for $i=1,2$.
We are considering the case $\tau_1\neq\tau_2$; if $\tau_1=\tau_2$ the proof is the same, except that the tentacle is divided in three components.
Consider an embedded loop $\alpha$ obtained by composition of
\begin{itemize}
\item an arc starting from the head of the octopus and running closely parallel to $\tau_1^{h}$,
\item a part of $\varphi$,
\item an arc running closely parallel to $\tau_2^{h}$ in the reverse direction and ending in the head of the octopus;
\end{itemize}
here ``closely'' means that the arc, $\tau_i^{h}$ and a small part of $\varphi$ bound a disc that does not intersect the other tentacles (for $i=1,2$), and that the internal parts of the two discs are disjoint.
By Lemma~\ref{existence:lem} and Proposition~\ref{fixed_octopus:prop}, we can suppose, up to L-~and MP-moves, that $\alpha$ is an edge of the ideal triangulation $\calT(P_1 \cup \partial M)$: let us call $R$ the region dual to $\alpha$.
Moreover, up to L-moves, we can suppose that $R$ is a disc with closure embedded in $M$ (namely, there is no self-adjacency along edges or vertices).
The local configuration now is shown in Fig.~\ref{before_change_crossing:fig}.
\begin{figure}
  \centerline{\includegraphics{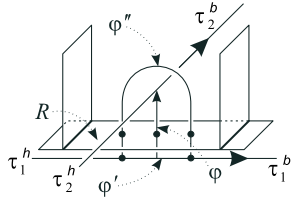}}
  \caption{The local configuration near the region $R$ dual to $\varphi$.}
  \label{before_change_crossing:fig}
\end{figure}
For the sake of simplicity, we continue to call $P_1$ the standard skeleton just obtained.

The aim is to modify $P_1$ to be able to apply a CR-move changing the crossing as desired.
We modify $P_1$ in two steps.
\begin{description}

\item{Step~1.}
Let us concentrate on the part that, in Fig.~\ref{before_change_crossing:fig}, lies over $R$.
Let us call $\overline{\varphi}''$ the part of $\varphi''$ that lies over $R$.
Up to an isotopy of $\overline{\varphi}''$, we can suppose that, by projecting $\overline{\varphi}''$ to $P_1$ along $\pi$, we obtain a disc, say $\Phi$, transverse to the singularities and to itself (see Fig.~\ref{preparation_change_crossing_1:fig}-left).
\begin{figure}
  \centerline{\includegraphics{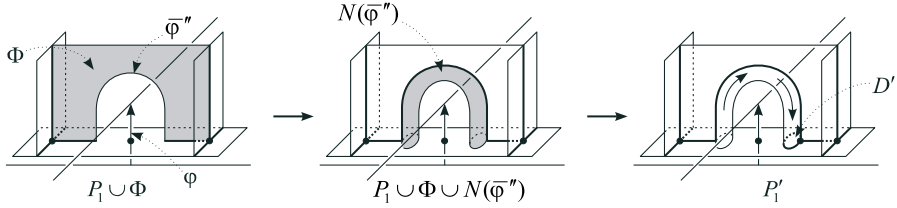}}
  \caption{How to modify $P_1$ to be able to apply a CR-move (Step~1).}
  \label{preparation_change_crossing_1:fig}
\end{figure}
Here $\pi$ is the projection of $M \setminus N(o(P_1))$ onto $P_1$.
Consider a little regular neighbourhood $N(\overline{\varphi}'')$ of $\overline{\varphi}''$, see
Fig.~\ref{preparation_change_crossing_1:fig}-centre.
The polyhedron $(P_1 \cup \Phi \cup N(\overline{\varphi}'')) \cup \partial M$ is a spine of $M$ minus a ball.
If we collapse it as shown in
Fig.~\ref{preparation_change_crossing_1:fig}-right, we obtain a quasi-standard skeleton $P'_1$ of $(M,X)$ such that $o(P'_1)=o(P_1)$.
Up to a change of $\Phi$, we can suppose that $P'_1$ is also standard.
Let us call $D'$ the little disc shown in Fig.~\ref{preparation_change_crossing_1:fig}-right.

\item{Step~2.}
Now, we concentrate on the part that, in Fig.~\ref{before_change_crossing:fig}, lies under $R$, see
Fig.~\ref{preparation_change_crossing_2:fig}.
We consider the arc analogous to $\overline{\varphi}''$ under $R$ and we project it to $P'_1$.
Let us call $\delta$ the arc just obtained.
Up to a change of the projection, we can suppose that it does not intersect the portion of $R$ near $\Phi$.
We apply L-moves along $\delta$ (see Fig.~\ref{preparation_change_crossing_2:fig}-left for an example), substituting the curve $\delta$
with another curve that intersects the singularity of $P$ in one point only.
\begin{figure}
  \centerline{\includegraphics{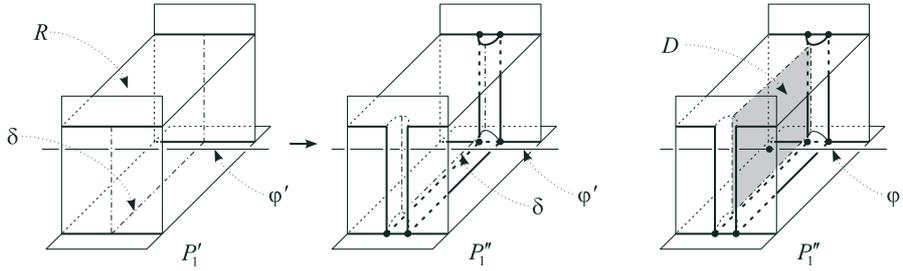}}
  \caption{Left: How to modify $P_1$ to be able to apply a CR-move (Step~2), an example.
  Right: The external disc $D$.}
  \label{preparation_change_crossing_2:fig}
\end{figure}
Let us continue to call $\delta$ the curve just obtained.
Let us call $P''_1$ the standard skeleton just obtained (obviously, $o(P''_1)=o(P'_1)=o(P_1)$) and $D$ the disc shown in Fig.~\ref{preparation_change_crossing_2:fig}-right (whose boundary contains $\delta$).
Note that $D$ is an external disc for $P''_1$.

\end{description}
By virtue of Proposition~\ref{fixed_octopus:prop}, we have that $P''_1$ can be obtained from $P_1$ via L-~and MP-moves.

Now, we are able to apply a CR-move.
The substitution of the disc $D'$ with the external disc $D$ is exactly a CR-move (the check is straightforward, so we leave it to the reader).
Call $P'_2$ the standard skeleton obtained by applying this CR-move.
In order to prove that the crossing changes as desired, note that the edge of the ideal triangulation $\calT(P''_1 \cup \partial M)$ dual to the region $D'$ is the loop $\alpha'$ obtained by composing
\begin{itemize}
\item an arc starting from the head of the octopus and running closely parallel to $\tau_1^{h}$,
\item a sub-arc of $\varphi''$,
\item an arc running closely parallel to $\varphi'$,
\item an arc running closely parallel to $\tau_1^{h}$ in the reverse direction and ending in the head of the octopus;
\end{itemize}
here ``closely'' means
\begin{itemize}
\item for the first arc, that it, $\tau_1^{h}$ and a small part of $\varphi''$ bound a disc that does not intersect the other tentacles,
\item for the third and the fourth arc, that they, $\tau_1^{h}$ and a small part of $\varphi''$ bound a disc that does not intersect the other tentacles and the disc above;
\end{itemize}
see Fig.~\ref{crossing_is_right:fig}.
\begin{figure}
  \centerline{\includegraphics{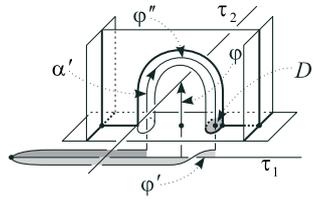}}
  \caption{The loop $\alpha'$ dual to the region $D'$.}
  \label{crossing_is_right:fig}
\end{figure}
After the CR-move, the tentacle corresponding to the component $C_1$ of $\partial M$ is obtained, up to isotopy, from $\tau_1$ by replacing the arc $\varphi'$ with $\varphi''$; see the end of Section~\ref{disc_replacement:subsec}.

In order to conclude the proof, it is enough to apply Proposition~\ref{fixed_octopus:prop} to obtain $P_2$ from $P'_2$ via L-~and MP-moves.
\end{proof}

\subsection{Generic changes of tentacle}

In this section, we will describe the \Ti-moves (two particular disc-replacements) and we will finally give the set of moves for the general case.
We need a modification of the CR-move because changes of crossing may not be enough to transform any two octopuses of $(M,X)$ into each other.

Let $o$ be an octopus, let $\varphi' : [0,1] \rightarrow o$ be a piece of a tentacle of $o$, and let $\varphi'' : [0,1] \rightarrow \interior{M}$ be a
generic simple arc such that $\varphi''([0,1]) \cap o = \{\varphi''(0),\varphi''(1)\}$ and that the endpoints of $\varphi''$ coincide with the endpoints of
$\varphi'$.
If we replace the arc $\varphi'$ of $o$ with the arc $\varphi''$, we obtain another octopus $o'$.
The transformation from $o$ to $o'$ is called a {\em change of tentacle} (see Fig.~\ref{change_tentacle:fig}).
\begin{figure}
  \centerline{\includegraphics{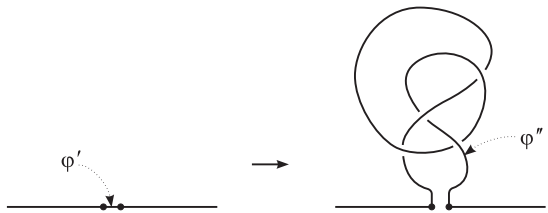}}
  \caption{A change of tentacle.}
  \label{change_tentacle:fig}
\end{figure}
Note that the change of tentacle does not depend on the orientation of $\varphi'$ and $\varphi''$.

\begin{remark}\label{change_tentacles_suffices:rem}
Changes of tentacle are enough to obtain any octopus of $(M,X)$ from a fixed one.
Namely, at most one change of tentacle for each tentacle is enough.
\end{remark}

\paragraph{\Ti-moves}
Let $P$ be a standard skeleton of $(M,X)$.
We call a \Tone{\em -move} (resp.~a \Ttwo{\em -move}) the move shown in Fig.~\ref{T_one:fig} (resp.~Fig.~\ref{T_two:fig}).
\begin{figure}
  \centerline{\includegraphics{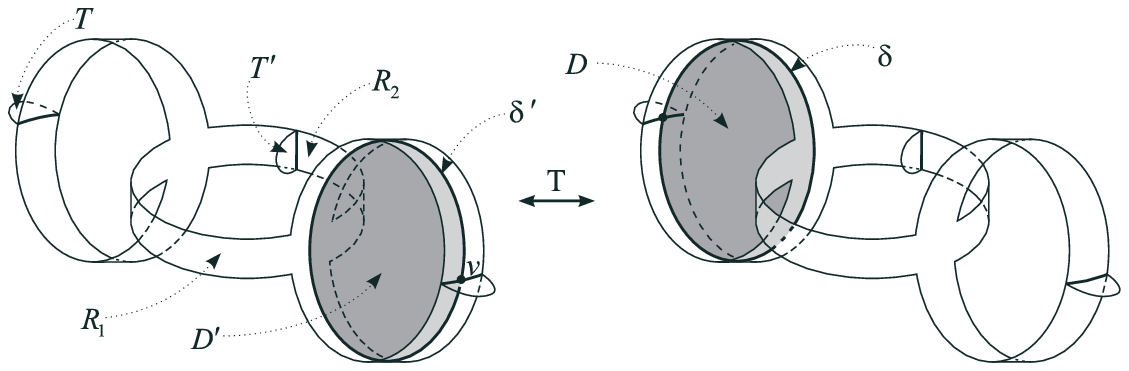}}
  \caption{The \Tone-move.}
  \label{T_one:fig}
\end{figure}
\begin{figure}
  \centerline{\includegraphics{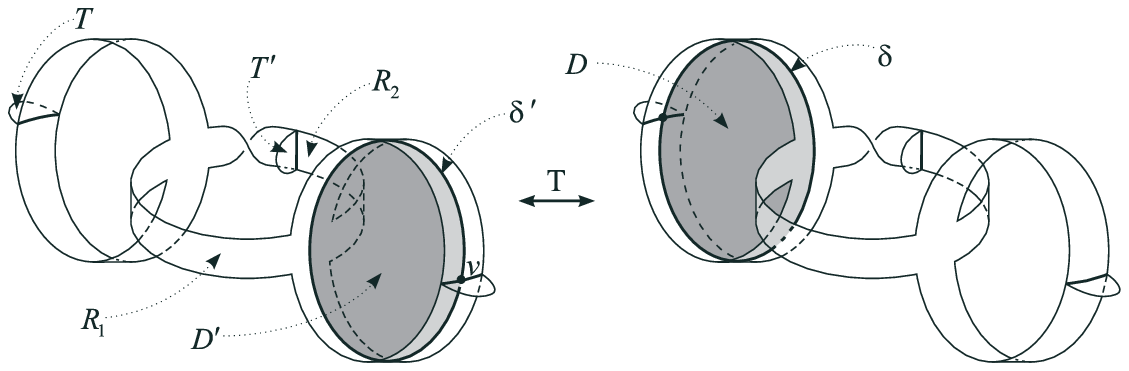}}
  \caption{The \Ttwo-move.}
  \label{T_two:fig}
\end{figure}
Let us call $P'$ the polyhedron obtained after the move.
Since this move is non-local, it must be described with some care.
Let us consider a region $D'$ incident to one vertex only, say $v$, and to two different regions, say $R_1$ and $R_2$, along the circle $\delta'$.
We suppose moreover that an unfolded version of $R_1$ and $R_2$ appears as in Fig.~\ref{unfolded_version_R_1_R_2_one:fig} (resp.~Fig.~\ref{unfolded_version_R_1_R_2_two:fig}).
\begin{figure}
  \centerline{\includegraphics{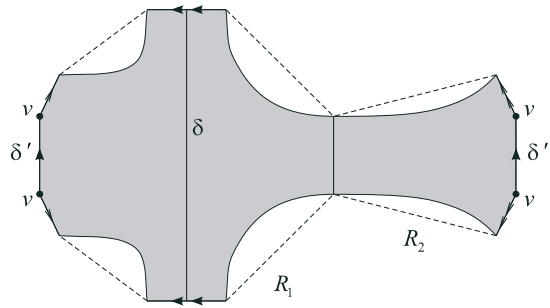}}
  \caption{Unfolded version of $R_1$ and $R_2$.
  The coloured cross is an unfolded version of the part of $R_1$ and $R_2$ drawn in Fig.~\ref{T_one:fig}.}
  \label{unfolded_version_R_1_R_2_one:fig}
\end{figure}
\begin{figure}
  \centerline{\includegraphics{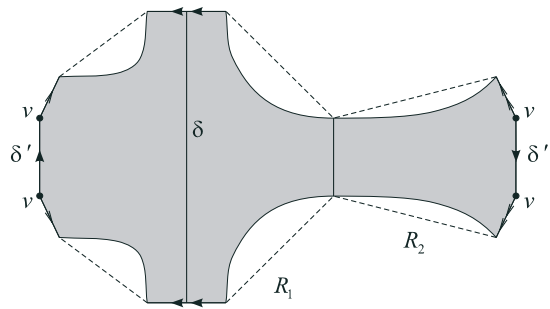}}
  \caption{Unfolded version of $R_1$ and $R_2$.
  The coloured cross is an unfolded version of the part of $R_1$ and $R_2$ drawn in Fig.~\ref{T_two:fig}.}
  \label{unfolded_version_R_1_R_2_two:fig}
\end{figure}
Note that the folded version of the cross coloured in Fig.~\ref{unfolded_version_R_1_R_2_one:fig} (resp.~Fig.~\ref{unfolded_version_R_1_R_2_two:fig}), say $\overline{R}$, is exactly the part of $R_1$ and $R_2$ involved in the move; see again Fig.~\ref{T_one:fig} (resp.~Fig.~\ref{T_two:fig}).
We suppose that the folded version is transversely orientable and that the little tongues $T$ and $T'$ lie on the other side of $\overline{R}$ with respect to $D'$.
The folded version of the arc $\delta$, which is a circle, bounds an external disc $D$ lying on the other side of $\overline{R}$ with respect to the little tongue $T$.
The move consists in replacing the disc $D'$ with the external disc $D$; see Fig.~\ref{T_one:fig} (resp.~Fig.~\ref{T_two:fig}).
Note that a \Tone-move (resp.~a \Ttwo-move) can lead to a non-standard polyhedron.
To avoid this loss of standardness, we will call \Tone-moves (resp.~\Ttwo-moves) only those preserving standardness.
With this convention, if we apply a \Tone-move (resp.~a \Ttwo-move) to a standard skeleton of $(M,X)$, the result will be another standard skeleton of $(M,X)$.
Note that each \Ti-move is a disc-replacement.

\begin{remark}
Each change of crossing is also a change of tentacle but the CR-move, which allows us to change the crossings, is not a \Ti-move, which will allow us to change the tentacles.
Actually, the difference between the two moves is deeper: the CR-move is local ({\em i.e.}~if we look at Fig.~\ref{CR_move:fig}, we note that there exists a horizontal disc which is an external disc for $P \cup D$, so the move modifies a portion of $P$ contained in a ball), while the \Ti-move may not be local.
\end{remark}

\paragraph{Strips}
We will now describe a generalisation of the procedure described in Proposition~\ref{change_crossing:prop}-Step~1 being useful in the proof of the theorem below.
Let us consider a standard skeleton $P$ of $(M,X)$.
Let $\varphi : [0,1] \rightarrow \interior{M}$ be a simple arc such that $\varphi([0,1]) \cap P = \{\varphi(0),\varphi(1)\}$.
Suppose moreover that $\varphi([0,1]) \cap o(P) = \emptyset$.
Let $\pi$ be a retraction of $M \setminus N(o(P))$ onto $P$.
Up to isotopy, we can suppose that $\pi(\varphi([0,1]))$ both intersects at least twice $S(P)$ and is in general position with respect to $S(P)$ and to itself.
Then, there exists a continuous $\Phi : [0,1] \times [0,1] \rightarrow M$ obtained by projecting $\varphi([0,1])$ along $\pi$, where $\Phi(1,0)$ and $\Phi(1,1)$ are the two intersection points mentioned above, and $\Phi(0,t)=\varphi(t)$ holds for each $t \in [0,1]$.
Such a $\Phi$ will be called a {\em strip associated to $\varphi$} (see Fig.~\ref{strip:fig}).
\begin{figure}
 \centerline{\includegraphics{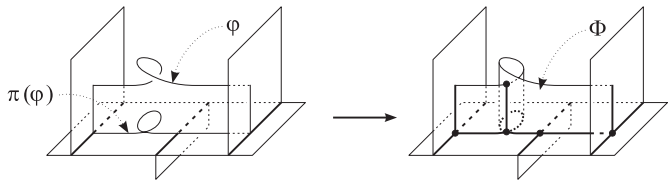}}
  \caption{A strip associated to $\varphi$, an example.}
  \label{strip:fig}
\end{figure}
Note that it may have self-intersections.
We can suppose that $\Phi$ is in general position with respect to $P$ and to itself.

Let us consider now a simple arc $\varphi : [0,1] \rightarrow \interior{M}$ such that $\varphi([0,1]) \cap P = \{\varphi(0),\varphi(s),\varphi(1)\}$ with $0<s<1$, that $\varphi([0,1]) \cap o(P) = \emptyset$, and that $\varphi$ is in general position with respect to $P$.
The arc $\varphi$ can be divided in two arcs $\varphi_1 : [0,s] \rightarrow \interior{M}$ and $\varphi_2 : [s,1] \rightarrow \interior{M}$ satisfying the hypotheses above, so there exist two strips $\Phi_1 : [0,1] \times [0,s] \rightarrow M$ and $\Phi_2 : [0,1] \times [s,1] \rightarrow M$ associated to $\varphi_1$ and $\varphi_2$, respectively.
Now, perhaps $\Phi_1$ and $\Phi_2$ do not fit together to give a continuous $\Phi : [0,1] \times [0,1] \rightarrow M$.
But, up to a move of $\Phi_2$, we can suppose that such a $\Phi$ exists and is in general position with respect to $P$ and to itself, see Fig.~\ref{fit_strip:fig}.
\begin{figure}
  \centerline{\includegraphics{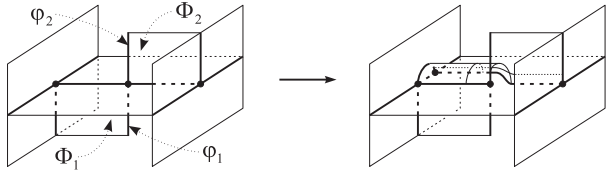}}
  \caption{How to move $\Phi_2$, so that the two strips $\Phi_1$ and $\Phi_2$ fit together.}
  \label{fit_strip:fig}
\end{figure}
As above, such a $\Phi$ will be called a {\em strip associated to $\varphi$}.
Obviously, we can generalise this technique to arcs with a generic (finite) number of intersections with $P$.

\paragraph{Changing the tentacles}
The following result states that \Ti-moves are enough to change the tentacles.
\begin{proposition}\label{change_tentacle:prop}
Let $P_1$ and $P_2$ be standard skeleta of $(M,X)$ such that $o(P_2)$ is obtained from $o(P_1)$ via changes of tentacles.
Then, $P_2$ can be obtained from $P_1$ via {\rm L}-,~{\rm MP}-~and \Ti-moves.
\end{proposition}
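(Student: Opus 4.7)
The plan is to mimic the strategy used for Proposition~\ref{change_crossing:prop}, but to replace the local arc $\varphi$ from that proof with a generic arc whose intersections with $P_1$ may be many, and to handle the more flexible topology of the replaced tentacle by means of the strip construction introduced just before the statement. By an obvious induction on the number of changes of tentacle (and using Remark~\ref{change_tentacles_suffices:rem} to know we need consider only finitely many), it is enough to treat a single change of tentacle, replacing the sub-arc $\varphi'$ of a tentacle $\tau$ with an arc $\varphi''$ in general position with respect to $P_1$. I will reduce, via L- and MP-moves, to a standard skeleton on which a single \Tone- or \Ttwo-move produces a skeleton with the correct octopus, and then invoke Proposition~\ref{fixed_octopus:prop} to conclude.

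First I would construct a loop $\alpha$ closely parallel to $\varphi'$ together with short arcs running along the pieces $\tau_1^{h}, \tau_2^{h}$ of the tentacles meeting the endpoints of $\varphi'$, chosen so as to bound small discs disjoint from the remaining tentacles, exactly as in the proof of Proposition~\ref{change_crossing:prop}. By Lemma~\ref{existence:lem} and Proposition~\ref{fixed_octopus:prop}, I can assume $\alpha$ is an edge of $\calT(P_1\cup\partial M)$, dual to a region $R$; up to further L-moves, I may assume $R$ has closure embedded in $M$. The two cases of the statement (\Tone- versus \Ttwo-move) correspond to whether the two local ``upper'' and ``lower'' sides of $R$ near the relevant cross in Figs.~\ref{unfolded_version_R_1_R_2_one:fig} and~\ref{unfolded_version_R_1_R_2_two:fig} come from the same region of $P_1$ or from two different regions; I would pick the right one according to how the arcs meet $P_1$ at the endpoints of $\varphi'$ and the global topology of $\tau$ (same tentacle versus two different tentacles, and whether the change of tentacle is transversely orientable or not across $R$).

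The central step is to replace the Step~1/Step~2 of the proof of Proposition~\ref{change_crossing:prop} by the generalised strip construction. Choose a retraction $\pi : M\setminus N(o(P_1))\to P_1$ and let $\overline{\varphi}''$ denote the portion of $\varphi''$ lying on the appropriate side of $R$. Because $\varphi''$ is now generic (not contained in a little trivialised ball), I would project $\overline{\varphi}''$ along $\pi$ to obtain a strip $\Phi$ associated to $\overline{\varphi}''$ in the sense of the previous subsection, fitting together the sub-strips across the successive intersection points of $\varphi''$ with $P_1$. I would then add $\Phi$ and a small regular neighbourhood $N(\overline{\varphi}'')$ to $P_1$ and collapse the result, exactly as in Fig.~\ref{preparation_change_crossing_1:fig}, obtaining a standard skeleton $P_1'$ with $o(P_1')=o(P_1)$ and a small external disc $D'$ attached along an appropriate circle $\delta'$. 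The analogous construction on the other side of $R$ (corresponding to the second tongue $T'$ in the \Ttwo-case, or simply to the single tongue in the \Tone-case) provides the external disc $D$ I want to substitute for $D'$; again L-moves can be used along the relevant arc so that it intersects $S(P_1')$ in the required minimal way. By Proposition~\ref{fixed_octopus:prop}, the passage from $P_1$ to $P_1''$ is realised by L- and MP-moves.

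At this point the replacement of $D'$ by $D$ is, by inspection of Figures~\ref{T_one:fig} and~\ref{T_two:fig}, precisely a \Tone- or a \Ttwo-move. To verify that after the move the octopus has changed as required, I would compute the edge of $\calT(P_1''\cup\partial M)$ dual to $D'$ exactly as in Fig.~\ref{crossing_is_right:fig}: it is a loop composed of arcs running closely parallel to $\tau_1^{h}$, a piece of $\varphi''$, and an arc running closely parallel to $\tau_2^{h}$, so the general discussion at the end of Section~\ref{disc_replacement:subsec} shows that after the disc-replacement the tentacle of the resulting octopus is obtained from $\tau$ by replacing $\varphi'$ with $\varphi''$, while the other tentacles are unchanged. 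Hence the resulting $P_2'$ satisfies $o(P_2')=o(P_2)$, and one last application of Proposition~\ref{fixed_octopus:prop} produces $P_2$ from $P_2'$ by L- and MP-moves. The main obstacle that I expect to require careful bookkeeping is showing that the strip construction across multiple intersections of $\varphi''$ with $P_1$ can be made to produce a genuinely standard (not just quasi-standard) skeleton with $D'$ incident to exactly one vertex and to the right number (one or two) of distinct regions in the configurations of Figures~\ref{unfolded_version_R_1_R_2_one:fig} and~\ref{unfolded_version_R_1_R_2_two:fig}; this is where the transverse orientability condition on $\overline{R}$ and the choice between \Tone and \Ttwo enter, and where additional L-moves may be needed to separate regions before applying the move.
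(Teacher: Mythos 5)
Your proposal follows essentially the same route as the paper's proof: reduce to a single change of tentacle, use the strip construction to project $\varphi''$ onto the skeleton and build the configuration of Figures~\ref{T_one:fig} and~\ref{T_two:fig}, distinguish the \Tone- from the \Ttwo-case by whether the loop obtained by composing $\varphi'$ and $\varphi''$ is orientation-preserving, and sandwich everything between applications of Proposition~\ref{fixed_octopus:prop} together with the tentacle-alteration analysis of Section~\ref{disc_replacement:subsec}. The only real divergence is cosmetic: you import the two-tentacle notation $\tau_1^{h},\tau_2^{h}$ and the dual loop $\alpha$ from the crossing case (here there is a single tentacle $\tau$, and the paper instead just isotopes $\varphi''$ so that its first and last intersections with $P_1$ lie in one region $R$ with embedded closure), and the ``careful bookkeeping'' you defer is precisely what the paper makes concrete, namely reducing $\varphi''$ to at most three intersections with $P$ and arranging exactly one vertical region to cut $N(\varphi'')$ so that the collapse yields a standard skeleton with the little disc $D'$ incident to one vertex.
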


Before turning into the proof, we state an obvious corollary of Proposition~\ref{fixed_octopus:prop}, Proposition~\ref{change_tentacle:prop} and Remark~\ref{change_tentacles_suffices:rem}.

\begin{theorem}\label{only_change_tentacle:teo}
Any two standard skeleta of $(M,X)$ can be obtained from each other via a sequence of {\rm L}-,~{\rm MP}-~and \Ti-moves.
\end{theorem}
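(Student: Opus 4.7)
The plan is to assemble Theorem~\ref{only_change_tentacle:teo} as an immediate corollary of the three results the excerpt already isolates, namely Remark~\ref{change_tentacles_suffices:rem}, Proposition~\ref{change_tentacle:prop} and Proposition~\ref{fixed_octopus:prop}. The idea is to split the task of relating two standard skeleta $P_1$ and $P_2$ of $(M,X)$ into two stages: first align their octopuses by an octopus-level argument, then fix the octopus and apply the fixed-octopus calculus.

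Concretely, I would proceed as follows. Let $o_1 = o(P_1)$ and $o_2 = o(P_2)$ be the octopuses of the two skeleta. By Remark~\ref{change_tentacles_suffices:rem}, the two octopuses $o_1$ and $o_2$, viewed up to isotopy in $M$, are related by a finite sequence of changes of tentacle (indeed, at most one per tentacle). Apply Proposition~\ref{change_tentacle:prop} to this sequence: it produces a sequence of L-,~MP- and \Ti-moves transforming $P_1$ into a standard skeleton $P_1^*$ of $(M,X)$ whose octopus $o(P_1^*)$ equals $o_2 = o(P_2)$. At this point $P_1^*$ and $P_2$ are two standard skeleta of $(M,X)$ sharing the same octopus, so Proposition~\ref{fixed_octopus:prop} produces a sequence of L- and MP-moves carrying $P_1^*$ to $P_2$. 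Concatenating the two sequences gives the required sequence of L-,~MP- and \Ti-moves from $P_1$ to $P_2$.

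Since all of the substantive content (invariance of the octopus under L- and MP-moves, the disc-replacement analysis of how \Ti-moves act on tentacles, the strip construction, and the Turaev--Viro-based fixed-octopus calculus) is already absorbed into the cited results, there is no real obstacle at this final step; the only thing to verify is that the concatenation makes sense, which it does because every intermediate polyhedron is by construction a standard skeleton of $(M,X)$. If anything is worth a line of comment, it is that Proposition~\ref{change_tentacle:prop} is applied to a chain of octopus modifications rather than a single one, so one should iterate the proposition along the chain, using at each step the skeleton produced by the previous step as the new ``$P_1$''; this is legitimate because that proposition takes as input any two skeleta whose octopuses differ by changes of tentacle, not just by a single one. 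This iteration is the only place where a (trivial) inductive bookkeeping is needed, and it completes the proof.
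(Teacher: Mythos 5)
Your proposal is correct and is essentially the paper's own argument: the paper derives the theorem as an ``obvious corollary'' of exactly the three results you cite (Remark~\ref{change_tentacles_suffices:rem}, Proposition~\ref{change_tentacle:prop} and Proposition~\ref{fixed_octopus:prop}). The only cosmetic remark is that since Proposition~\ref{change_tentacle:prop} already relates \emph{any} two skeleta whose octopuses differ by changes of tentacle, you may take $P_1^*=P_2$ directly, making the final invocation of Proposition~\ref{fixed_octopus:prop} redundant (it is in any case already used inside the proof of that proposition).
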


\begin{proof}[Proof of Proposition~\ref{change_tentacle:prop}]
Obviously, it is enough to prove that if $o(P_2)$ is obtained from $o(P_1)$ via one change of tentacle, then $P_2$ can be obtained from $P_1$ via L-,~MP-~and \Ti-moves.
Hence, let us suppose that $o(P_2)$ is obtained from $o(P_1)$ via one change of tentacle.
We use the same notation as that of Fig.~\ref{change_tentacle:fig}.

Call $\tau$ the tentacle involved in the change of tentacle.
Up to isotopy, we can suppose that $\varphi''$ intersects $P_1$ at least twice.
Let us call $t_i$, with $t_1 < \ldots < t_m$, the ``times'' at which $\varphi''$ intersects $P_1$.
Up to isotopy, we can suppose that the first and the last intersection ($\varphi''(t_1)$ and $\varphi''(t_m)$) belong to the same region (say $R$) and
that $\varphi''$ appears, near $\varphi'$, as shown in Fig.~\ref{before_change_tentacle:fig}.
Moreover, up to L-moves, we can suppose that $R$ is a disc with closure embedded in $M$ (namely, that there is no self-adjacency along edges or vertices).
\begin{figure}
  \centerline{\includegraphics{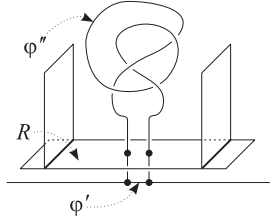}}
  \caption{The starting configuration.}
  \label{before_change_tentacle:fig}
\end{figure}

The aim is to modify $P_1$ to be able to apply a \Ti-move changing the tentacle as desired.
We modify $P_1$ in two steps.

\begin{description}

\item{Step~1.}
We repeat the same technique used in Step~2 of the proof of Proposition~\ref{change_crossing:prop} obtaining another standard skeleton $P'_1$ such that $o(P'_1) = o(P_1)$.
We obtain a curve $\delta$ intersecting the singularities of $P'_1$ only in one point, and we call $D$ the external disc shown in Fig.~\ref{preparation_change_crossing_2:fig}-right
(see Step~2 of the proof of Proposition~\ref{change_crossing:prop} for notation).

\item{Step~2.}
Let us now concentrate on the arc $\varphi''([t_1,t_m])$.
We generalise the procedure described in Step~1 of the proof of Proposition~\ref{change_crossing:prop}.
Let $\Phi$ be a strip associated to $\varphi''([t_1,t_m])$.
Up to an isotopy of $\varphi''([t_1,t_m])$ (process that requires a modification of $\Phi$), we can suppose that $\varphi''$ intersects $P$ at most thrice and that $\Phi([0,1] \times [0,1])$ does not intersect $D$.
Let us consider now a little regular neighbourhood $N(\varphi'')$ of $\varphi''([t_1,t_m])$; we have shown an example in Fig.~\ref{neighborhood_varphi:fig}.
\begin{figure}
  \centerline{\includegraphics{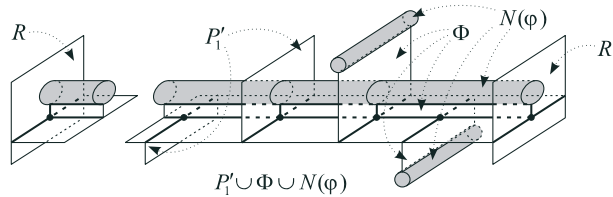}}
  \caption{A portion of the neighbourhood $N(\varphi'')$ of $\varphi''([t_1,t_m])$, an example.}
  \label{neighborhood_varphi:fig}
\end{figure}
Up to a move of the vertical regions (as shown in Fig.~\ref{move_vertical_regions:fig}), we can suppose that exactly one vertical region cuts
$N(\varphi'')$.
\begin{figure}
  \centerline{\includegraphics{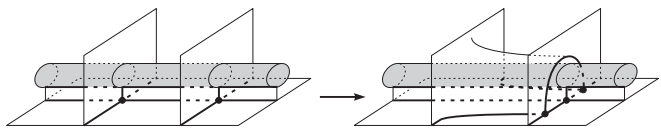}}
  \caption{How to move the vertical regions to have that exactly one vertical region cuts $N(\varphi'')$.}
  \label{move_vertical_regions:fig}
\end{figure}
The polyhedron $(P'_1 \cup \Phi \cup N(\varphi'')) \cup \partial M$ is a spine of $M$ minus a ball.
If we collapse $N(\varphi'')$ and we slightly move the vertical regions (see Fig.~\ref{standardization:fig}), we obtain a standard skeleton $P''_1$ of $(M,X)$ such that $o(P''_1) = o(P'_1)$.
\begin{figure}
  \centerline{\includegraphics{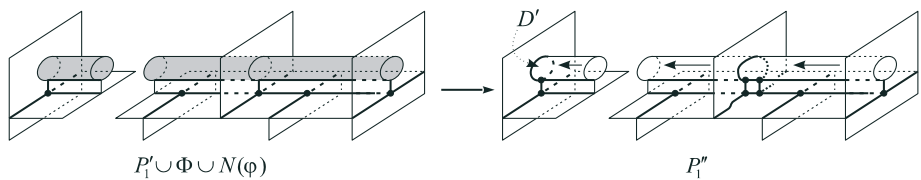}}
  \caption{How to ``standardise'' $P'_1 \cup \Phi \cup N(\varphi'')$.}
  \label{standardization:fig}
\end{figure}
Let us call $D'$ the little disc shown in Fig.~\ref{standardization:fig}-right.

\end{description}
By virtue of Proposition~\ref{fixed_octopus:prop}, we have that $P''_1$ can be obtained from $P_1$ via L-~and MP-moves.

The proof now proceeds as that of Proposition~\ref{change_crossing:prop}, so we will leave the details to the reader.
The substitution of the disc $D'$ with the external disc $D$ is exactly a \Ti-move; more precisely, it is a \Tone-move or a \Ttwo-move depending on whether the loop obtained by composing $\varphi'$ and $\varphi''$ is orientation preserving or not.
By applying this \Ti-move, we obtain a standard skeleton, say $P'_2$, and we change the tentacle as desired ({\rm i.e.}~$o(P'_2)=o(P_2)$).
In order to conclude the proof, it is enough to apply Proposition~\ref{fixed_octopus:prop} to obtain $P_2$ from $P'_2$ via L-~and MP-moves.
\end{proof}

\paragraph{Acknowledgements}
I am very grateful to Riccardo Benedetti and Carlo Petronio for the useful
discussions I have had in the beautiful period I have spent at the Galileo Galilei Doctoral School of Pisa.
I would like to thank the Department of Mathematics of the University of Salento for the nice welcome and, in particular, Prof.~Giuseppe De Cecco for his willingness.
I would also like to thank the Department of Mathematics and Applications of the University of Milano-Bicocca for the nice welcome.
Last but not least, I would like to thank the anonymous referee for very useful comments and corrections.

This research has been supported by the school of graduate studies ``Galileo Galilei'' of the University of Pisa, and afterward by the grant ``Ennio De Giorgi'' (2007-2008) from the Department of Mathematics of the University of Salento and by a Type~A Research Fellowship from the University of Milano-Bicocca.

\begin{small}

\end{small}

\noindent
\textit{Department of Mathematics and Applications\\
University of Milano-Bicocca\\
Via Cozzi, 53, I-20125, Milano, Italy}\\[5pt]
\url{gennaro.amendola@unimib.it}\\[5pt]
\url{http://www.dm.unipi.it/~amendola/}

\end{document}